\newcolumntype{P}[1]{>{\centering\arraybackslash}p{#1}}
\newcolumntype{M}[1]{>{\centering\arraybackslash}m{#1}}
\newtheorem{theorem}{Theorem}[section]
\newtheorem{proposition}[theorem]{Proposition}
\newtheorem{lemma}[theorem]{Lemma}
\newtheorem{conjecture}[theorem]{Conjecture}
\theoremstyle{definition}
\newtheorem{definition}[theorem]{Definition}
\newtheorem{example}[theorem]{Example}
\newtheorem{remark}[theorem]{Remark}
\newcommand{\PP}{\mathbb{P}}
\newcommand{\QQ}{\mathbb{Q}}
\newcommand{\CC}{\mathbb{C}}
\newcommand{\ZZ}{\mathbb{Z}}
\newcommand{\cO}{\mathcal{O} }
\newcommand{\cC}{\mathcal{C} }
\newcommand{\cE}{\mathcal{E} }
\newcommand{\cF}{\mathcal{F} }
\newcommand{\cH}{\mathcal{H} }
\newcommand{\cK}{\mathcal{K} }
\newcommand{\cN}{\mathcal{N} }
\newcommand{\cQ}{\mathcal{Q} }
\newcommand{\cS}{\mathcal{S} }
\newcommand{\cT}{\mathcal{T} }
\newcommand{\cU}{\mathcal{U} }
\newcommand{\rH}{\mathrm{H} }
\newcommand{\rM}{\mathrm{M} }
\newcommand{\rR}{\mathrm{R} }
\newcommand\bG{\mathbf{G}}
\newcommand\bH{\mathbf{H}}
\newcommand\bM{\mathbf{M}}
\newcommand\bN{\mathbf{N}}
\newcommand\fM{\mathfrak{M}}
\newcommand\dual{^{\vee}}
\newcommand\virt{^{\mathrm{vir}} }
\newcommand\vphi{\varphi}
\newcommand\hra{\hookrightarrow}
\newcommand\wtil{\widetilde}
\newcommand{\GW}{\mathrm{GW}}
\newcommand{\DT}{\mathrm{DT}}
\newcommand\Hom{\mathrm{Hom} }
\newcommand\Ext{\mathrm{Ext} }
\newcommand{\Gr}{\mathrm{Gr} }
\newcommand\lr{\rightarrow}
\newcommand{\ses}[3]{0\lr{#1}\lr{#2}\lr{#3}\lr 0}
\newcommand\rk{\mathrm{rk}}
\begin{document}

\title[GW-DT-GV correspondences on local CY 4-folds]{Correspondence of Donaldson-Thomas and Gopakumar-Vafa invariants on local Calabi-Yau 4-folds over $V_5$ and $V_{22}$}

\author{Kiryong Chung}
\address{Department of Mathematics Education, Kyungpook National University, 80 Daehakro, Bukgu, Daegu 41566, Korea}
\email{krchung@knu.ac.kr}

\author{Sanghyeon Lee}
\address{School of Mathematics, Korea Institute for Advanced Study, 85 Hoegiro, Dongdaemun-gu, Seoul 02455, Korea}
\email{sanghyeon@kias.re.kr}

\author{Joonyeong Won}
\address{Center for Mathematical Challenges, Korea Institute for Advanced Study, 85 Hoegiro, Dongdaemun-gu, Seoul 02455, Korea}
\email{leonwon@kias.re.kr}

\keywords{Gromov-Witten invariants, Gopakumar-Vafa invariants, Donaldson-Thomas invariants, Fano varieties}
\subjclass[2010]{14N35, 14C05,14C15, 14J45}

\begin{abstract}
 We compute Gromov-Witten (GW) and Donaldson-Thomas (DT) invariants (and also descendant invariants) for local CY $4$-folds over Fano $3$-folds, $V_5$ and $V_{22}$ up to degree $3$. We use torus localization for GW invariants computation, and use classical results for Hilbert schemes on $V_5$ and $V_{22}$ for DT invariants computation. From these computations, one can check correspondence between DT and Gopakumar-Vafa (GV) invariants conjectured by Cao-Maulik-Toda in genus $0$. Also we can compute genus $1$ GV invariants via the conjecture of Cao-Toda, which turned out to be $0$. These fit into the fact that there are no smooth elliptic curves in $V_5$ and $V_{22}$ up to degree $3$. 
\end{abstract}

\maketitle
\section{Introduction}

\subsection{Motivation}
Recently, Donaldson-Thomas (DT) invariants for Calabi-Yau 4-folds and the correspondence among related invariants, like stable pair (PT) and Gopakumar-Vafa invariants (GV or BPS) have been actively studied in \cite{CL14, CMT21, CMT18, CK20, CK18, CKM21}.

Virtual cycles of $\mathrm{DT_4}$ moduli space has been developed by Borisov-Joyce \cite{BJ17} and by Oh-Thomas \cite{OT20}. For a CY 4-fold $X$ and a curve class $\beta \in \rH_2(X,\ZZ)$, we denote $M_{\beta}(X)$ the moduli space of 1-dimensional stable sheaves $F$ on $X$, such that $[F]=\beta$ and $\chi(F)=1$. Then $\DT_4$ invariants are defined by integrations of cohomology insertions over virtual cycles $[M_{\beta}(X)]\virt$. Note that the virtual cycle is defined via some suitable choice of an orientation on the moduli space $M_{\beta}(X)$.

Insertions are defined by the following way. Consider the product space $M \times X$ and a (normalized)universal sheaf $\cF$ over it. For a cohomology element $\gamma \in \rH^*(X, \ZZ)$, let
\begin{align*}
\tau_i(\gamma) := (\pi_M)_*( \pi_X^*\gamma \cup \mathrm{ch}_{\dim(X)+i-1}(\cF) ) \in \rH^{*+2(i-1)}(M, \ZZ)
\end{align*}
where $M$ is an abbreviation of $M_{\beta}(X)$.
$\mathrm{DT_4}$ invariants and descendent invariants (with a single insertion) are defined by
\[
\langle \tau_i(\gamma) \rangle_{\beta} := \int_{[M_{\beta}(X)]\virt} \tau_i (\gamma)
\]
via some suitable choice of orientation on the moduli space $M_{\beta}$. $\langle \tau_0(\gamma) \rangle_{\beta}$ is called \emph{$\DT_4$ invariants}, and we denote it by $\DT_4(X)(\beta|\gamma)$.
When $i \geq 1$, we call $\tau_i(\gamma)$ \emph{descendant insertions}, and integrations of descendant insertions over the virtual class are called \emph{descendant invariants}.

On the other hand, we can consider $\mathrm{DT_3}$ invariants on Fano 3-folds. Note that $\DT_3$ moduli space for a Fano 3-fold is isomorphic to the $\DT_4$ moduli space of the corresponding local CY 4-fold \cite[Theorem 6.5]{CL14}. Moreover, $\mathrm{DT_3}$ invariants for a Fano 3-fold is equal to the $\DT_4$ invariant of the corresponding local CY 4-fold when we choose some suitable orientation in the $\DT_4$ moduli space \cite{Cao19a, Cao19b}. 

Now we review some famous conjectures between $\mathrm{DT_4}$ and GV invariants. Let us denote $n_{0,\beta}(\gamma)$ (resp. $n_{1,\beta}$) be the genus 0 (resp. genus 1) Gopakumar-Vafa invariants on a CY 4-fold $X$ defined in \cite{KP08}.
\begin{conjecture}\cite[Conjecture 0.2]{CMT18}\label{conj:genus0}
Via some suitable choice of orientation on the moduli space, we have
\begin{equation}\label{gwdtcor}
n_{0,\beta}(\gamma) = \mathrm{DT}_4(\beta|\gamma), \;\mathrm{GW}(\gamma)^{X}_{0,\beta} = \sum_{k|\beta} \frac{1}{k^2} \mathrm{DT}_4(X)(\beta/k | \gamma)
\end{equation}
for all $\gamma \in \rH^4(X, \ZZ)$.
\end{conjecture}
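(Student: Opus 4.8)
Conjecture~\ref{conj:genus0} cannot be expected in full generality; what is feasible --- and what I sketch here --- is to verify both identities in~\eqref{gwdtcor} for the local Calabi--Yau $4$-fold $X=\mathrm{Tot}(K_Y)$ over $Y=V_5$ and over $Y=V_{22}$, in every curve class $\beta=d\ell$ with $d\le 3$, by computing the two sides independently and comparing. Since $X$ retracts onto $Y$ we have $\rH_2(X,\ZZ)=\ZZ\ell$ and $\rH^4(X,\ZZ)\cong\rH^4(Y,\ZZ)$ of rank one, so it suffices to pin down, for each $d\in\{1,2,3\}$ and the generator $\gamma$ of $\rH^4$, the three numbers $\DT_4(X)(d\ell\mid\gamma)$, $n_{0,d\ell}(\gamma)$ and $\mathrm{GW}(\gamma)^X_{0,d\ell}$. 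The first identity in~\eqref{gwdtcor} is essentially the definition of $n_{0,\beta}$ as the DT$_4$ count, so the real content is the GW identity.

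For the DT side I would use the isomorphism $M_{d\ell}(X)\cong$ (the $\DT_3$ moduli space of $1$-dimensional subschemes / stable sheaves on $Y$) of \cite[Theorem 6.5]{CL14}, together with $\DT_3=\DT_4$ under the preferred orientation \cite{Cao19a,Cao19b}, and then recognise that moduli space explicitly. For $d=1$ a stable sheaf with $[F]=\ell$, $\chi(F)=1$ must be $\cO_\ell$ for a line $\ell\subset Y$, so $M_\ell(X)$ is the Hilbert scheme of lines (a $\PP^2$ for $V_5$), a smooth surface with classically known geometry and universal family; for $d=2$ and $d=3$ the moduli space is the corresponding component of the Hilbert scheme of conics, resp.\ of twisted cubics, together with the strata parametrising reducible or non-reduced degree-$d$ curves of arithmetic genus $0$ and the non-locally-free sheaves supported on them --- all classically described for $V_5$ and $V_{22}$ and, crucially, smooth projective of the expected dimension. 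On each such component, smoothness identifies the DT$_4$ virtual class with the square-root Euler class of the self-dual obstruction bundle $\cExt^2_{\pi_M}(\cF,\cF)$, so $\DT_4(X)(d\ell\mid\gamma)=\int_M \sqrt{e}\big(\cExt^2_{\pi_M}(\cF,\cF)\big)\cup\tau_0(\gamma)$, a Chern-class computation on an explicit smooth projective variety once the normalized universal sheaf is written down, with the overall sign fixed by matching a model case. (The descendant invariants $\langle\tau_i(\gamma)\rangle$ come out of the same computation with $\tau_0$ replaced by $\tau_i$.)

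For the GW side the key simplification is that every stable map to $X=\mathrm{Tot}(K_Y)$ of class $d\ell$ factors through the zero section: a non-contracted component mapping into a fibre would map a proper curve into $\AA^1$, and a contracted component is forced to zero because $K_Y$ has negative degree on every effective curve. Hence $\overline{M}_{0,1}(X,d\ell)=\overline{M}_{0,1}(Y,d\ell)$, with the CY$_4$ obstruction theory obtained from the one on $\overline{M}_{0,1}(Y,d\ell)$ by adding the fibre term $R^\bullet\pi_*f^*K_Y$; thus $\mathrm{GW}(\gamma)^X_{0,d\ell}$ is the integral over $[\overline{M}_{0,1}(Y,d\ell)]\virt$ of $\mathrm{ev}^*\gamma$ against the Euler class of that fibre term. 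To evaluate it I would run Graber--Pandharipande torus localization with respect to a maximal torus $T=\CC^*$ of $\mathrm{Aut}(Y)$ --- this is $\mathrm{PGL}_2$ for $V_5$, and for $V_{22}$ one replaces $Y$ by the Mukai--Umemura threefold (legitimate since GW invariants are deformation invariant), which again has automorphism group $\mathrm{PGL}_2$: the $T$-fixed loci are the standard decorated-graph loci built from the finitely many $T$-invariant rational curves through the $T$-fixed points of $Y$, which for $d\le 3$ are few, each contributing an explicit rational function of the equivariant parameter. Summing these and extracting the constant term gives $\mathrm{GW}(\gamma)^X_{0,d\ell}$, which one then checks equals $\sum_{k\mid d}k^{-2}\,\DT_4(X)(\tfrac{d}{k}\ell\mid\gamma)$ read off from the DT computation.

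I expect the main obstacle to be on the DT side at $d=3$: enumerating \emph{all} components of $M_{3\ell}(X)$ --- including those parametrising sheaves that are not structure sheaves of curves, such as extensions supported on a line or a conic and planar triple structures on a line --- proving each is smooth of the expected dimension, and then trivialising $\sqrt{e}$ of the obstruction bundle consistently, i.e.\ with a single coherent orientation across all components so that the counts add up correctly; the GW graph sum, by contrast, is intricate but mechanical once the $T$-invariant curves on $V_5$ and on the Mukai--Umemura $V_{22}$ are listed. As a by-product, the same localization and obstruction-bundle bookkeeping feeds the genus-$1$ Cao--Toda formula, and since neither $V_5$ nor $V_{22}$ contains a smooth elliptic curve of degree $\le 3$, the genus-$1$ contributions vanish and $n_{1,\beta}=0$ for these classes.
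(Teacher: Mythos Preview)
Your overall strategy matches the paper's: verify~\eqref{gwdtcor} for $X=|K_Y|$ with $Y\in\{V_5,V_{22}\}$ and $d\le 3$ by computing both sides independently. On the DT side you are essentially aligned with the paper, but you can simplify your anticipated obstacle at $d=3$: for $d\le 3$ the moduli space $\bM_d(Y)$ of stable sheaves is already known to coincide with the Hilbert scheme of curves with Hilbert polynomial $dm+1$ (cf.\ \cite{Chu19}), and these are the single smooth irreducible varieties $\PP^2,\PP^4,\Gr(2,5)$ for $V_5$ and $Q,\PP^2,\PP^3$ for $V_{22}$. There are no extra components of non-locally-free sheaves to enumerate, and rather than handling the square-root Euler class and orientations directly, the paper works with the $\DT_3$ obstruction bundle $[\text{ob}_{\bM_d}]=[T_{\bM_d}]+[\mathbf{R}\pi_*\mathbf{R}\cHom(\cF,\cF)[1]]-[\cO]$ on these smooth spaces, using explicit locally free resolutions of the universal sheaves from \cite{San14,Fae14,KS04}.

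On the GW side your route is genuinely different. You propose to localize on $\overline{M}_{0,1}(Y,d)$ with respect to a $\CC^*\subset\mathrm{PGL}_2=\mathrm{Aut}(Y)$, specializing $V_{22}$ to the Mukai--Umemura threefold to acquire such a torus. The paper instead exploits the presentation of $Y$ as the zero locus of an equivariant bundle $E$ on a Grassmannian $G$ (namely $\cO(1)^{\oplus 3}$ on $\Gr(2,5)$ for $V_5$, and $(\wedge^2\cS^*)^{\oplus 3}$ on $\Gr(3,7)$ for $V_{22}$): by quantum Lefschetz the twisted invariant is pushed forward to an integral of $e(\pi_*f^*E)\cup e(R^1\pi_*f^*K_Y)\cup\mathrm{ev}^*\gamma$ over the \emph{smooth} space $\overline{M}_{0,1}(G,d)$, and localization is then carried out with respect to the big torus $(\CC^*)^n$ on $G$. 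Your approach is more intrinsic and avoids the ambient variety, but it requires an explicit list of the $\CC^*$-fixed points and invariant rational curves on $V_5$ and on the Mukai--Umemura $V_{22}$, together with their normal-bundle weights; the paper's approach trades this for a larger but entirely uniform graph sum on the Grassmannian, which is what makes it amenable to machine computation.

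One small correction: $n_{0,\beta}(\gamma)$ is \emph{defined} in \cite{KP08} by inverting the multiple-cover formula on the GW side, not as a $\DT_4$ count. The two identities in~\eqref{gwdtcor} are therefore equivalent (as the paper notes), but neither is definitional; the content is precisely the comparison of the GW and DT numbers you compute.
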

Note that two equations in \eqref{gwdtcor} are equivalent via the definition of $n_{0,\beta}(\gamma)$ in \cite{KP08}. In \cite{CMT18}, 
Conjecture \ref{conj:genus0} is proven for some cases: CY 4-folds with a elliptic structure and local curves/surfaces.
Also, in \cite{CT21}, the following conjecture is proposed which relates $\DT_4$ descendent invariants and genus one GV invariants. Let $m_{\beta_1, \beta_2}$ be the meeting invariants defined in Section 0.3 in \cite{KP08} (for a detail, see Section \ref{meeting}).
\begin{conjecture}\cite[Conjecture 0.2]{CT21}\label{conj:genus1}
Via some suitable choice of orientation on the moduli space, we have
\begin{align*}
 \langle \tau_1(\gamma) \rangle_{\beta} & =\frac{n_{0,\beta}(\gamma^2)}{2(\gamma\cdot \beta)}-\sum_{\beta_1+\beta_2=\beta} \frac{(\gamma\cdot \beta_1)(\gamma\cdot \beta_2)}{4(\gamma\cdot \beta)} m_{\beta_1,\beta_2} \\ 
& -\sum_{k\geq1, k|\beta} \frac{(\gamma\cdot \beta)}{k}n_{1,\beta/k}
\end{align*}
for all $\gamma \in \rH^2(X,\ZZ)$.
\end{conjecture}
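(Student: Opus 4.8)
Since Conjecture~\ref{conj:genus1} is expected to be out of reach in general, the realistic goal is to \emph{verify} it for the local Calabi--Yau $4$-folds $X=\mathrm{Tot}(K_Y)$ with $Y=V_5$ or $Y=V_{22}$ and curve classes $\beta=d[\ell]$ with $d\le 3$, and thereby to compute the genus-$1$ Gopakumar--Vafa invariants $n_{1,\beta/k}$. The plan is to compute the left-hand side $\langle\tau_1(\gamma)\rangle_\beta$ together with the first two terms on the right-hand side (built from $n_{0,\beta}(\gamma^2)$ and the meeting invariants $m_{\beta_1,\beta_2}$) by independent methods, and then to read the identity of Conjecture~\ref{conj:genus1} as a recursion in the degree determining the remaining unknown $n_{1,\beta/k}$; one then checks that the resulting values all vanish, which is consistent with the absence of smooth elliptic curves of degree $\le 3$ in $V_5$ and $V_{22}$.

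For the left-hand side I would argue as follows. By \cite[Theorem 6.5]{CL14} the moduli space $M_\beta(X)$ of one-dimensional stable sheaves with $\chi=1$ is isomorphic to the corresponding $\DT_3$ moduli space on $Y$, and by \cite{Cao19a, Cao19b} the $\DT_4$ virtual class coincides, for a suitable orientation, with the $\DT_3$ one; for $d\le 3$ these are classical objects on $V_5$ and $V_{22}$ (Hilbert schemes of lines, conics, twisted cubics and their degenerations), which turn out to be smooth of known dimension. On such a smooth model $[M_\beta(X)]\virt$ is a square-root Euler class of the self-dual obstruction complex in the sense of \cite{OT20}, so $\langle\tau_1(\gamma)\rangle_\beta$ collapses to an intersection number on $M_\beta(X)$; the insertion $\tau_1(\gamma)=(\pi_M)_*\bigl(\pi_X^*\gamma\cup\mathrm{ch}_4(\cF)\bigr)$ is then extracted from the normalized universal sheaf $\cF$ by Grothendieck--Riemann--Roch. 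In parallel I would compute $n_{0,\beta}(\gamma^2)$ in two ways---directly as $\DT_4(X)(\beta|\gamma^2)$ via the same $\DT_3$-to-$\DT_4$ dictionary, and as a check from the genus-$0$ Gromov--Witten invariants $\mathrm{GW}(\gamma^2)^X_{0,\beta}$ obtained by $\mathbb{C}^*$-localization on the fibres of $K_Y\to Y$, the two being related by Conjecture~\ref{conj:genus0}---and I would compute the meeting invariants $m_{\beta_1,\beta_2}$ for the finitely many splittings $\beta_1+\beta_2=\beta$ with $d\le 3$ from their definition in \cite{KP08} (recalled in Section~\ref{meeting}), in terms of moduli of stable maps to $X$ and diagonal-type cycles.

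The step I expect to be the main obstacle is the precise control of the $\DT_4$ virtual class and, above all, of the \emph{orientation}: Conjectures~\ref{conj:genus0} and \ref{conj:genus1} hold only ``via some suitable choice of orientation,'' so one must identify the obstruction bundle on the smooth moduli space correctly (its rank has to account for the excess over the virtual dimension $1$) and pin down the sign forced by the orientation conventions of \cite{OT20} and \cite{CT21}. A secondary difficulty is matching the normalization of the meeting invariants $m_{\beta_1,\beta_2}$ to that of \cite{KP08} and checking that every contributing splitting in degree $3$---including reducible or non-reduced ones---is accounted for. Once these are settled, the rest is a finite collection of intersection-theoretic computations on explicit smooth varieties, and the independent geometric input (no smooth elliptic curves of degree $\le 3$ in $V_5$ or $V_{22}$, hence a vanishing genus-$1$ BPS count) provides a consistency check on the whole computation, as well as a cross-check against a direct genus-$1$ Gromov--Witten computation feeding the Klemm--Pandharipande definition of $n_{1,\beta}$.
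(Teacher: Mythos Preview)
Your plan is essentially the paper's: compute $\langle\tau_1(\gamma)\rangle_d$ on the smooth $\DT_3$ moduli spaces of $V_5$ and $V_{22}$ via the explicit universal sheaves and obstruction bundles, compute the genus-$0$ inputs $n_{0,d}(\gamma^2)$ from the same data (cross-checked by the twisted GW computation), feed these into the meeting invariants, and then compare the two sides of the conjectural identity degree by degree. Two small corrections are worth flagging. First, the meeting invariants $m_{\beta_1,\beta_2}$ are not computed in the paper via ``moduli of stable maps and diagonal-type cycles''; they are obtained purely from the recursive rules of \cite{KP08} (reproduced in Section~\ref{meeting}), which express them in terms of the already-known $n_{0,\beta}(S_i)$ and $n_{0,\beta}(c_2(X))$, so no separate virtual-class computation or worry about reducible configurations is needed there. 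Second, the paper does \emph{not} do an independent genus-$1$ GW computation to evaluate $n_{1,d}$; rather, it shows that the identity of Conjecture~\ref{conj:genus1} holds in these cases \emph{if and only if} $n_{1,d}=0$, and then appeals to the geometric observation that $V_5$ and $V_{22}$ contain no smooth elliptic curves of degree $\le 3$ as the reason this is the expected answer. So your proposal to ``solve for $n_{1,d}$ via the conjecture'' is exactly what the paper does, but the result is a conditional statement (Theorem~\ref{pfmeeting}), not an unconditional verification.
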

Here, the number $m_{\beta_1,\beta_2}$ is a virtual count of degree $\beta_1$ curves in $X$ which meets with degree $\beta_2$, called \emph{meeting invariants}. We will compute these numbers in Section \ref{meeting}.

In \cite{CT21}, Conjecture \ref{conj:genus1} is proved in several cases:
CY 4-folds with an elliptic fibration structure and local Fano $3$-fold over
 ${\PP^3}$. In this paper, as a continuation of the latter case, our study focus on the Fano $3$-folds: $V_5$ and $V_{22}$. See Section \ref{casev5} and \ref{casev22} for definitions of $V_5$ and $V_{22}$. These Fano $3$-folds arise as a minimal compactification of the complex $3$-space $\CC^{3}$. For detailed descriptions, see the beginning parts of Section \ref{section:DTcomp}.
 
 \subsection{Main results}
In this paper, we compute some low degree $\mathrm{DT_3}$ invariants for Fano 3-folds $V_5$ and $V_{22}$. 
For the cases $\beta = d\cdot[\text{line}]$, $1\leq d \leq 3$ where $[\text{line}]$ is a class of $[\PP^1]$ in their projective embedding via the very ample generators, we review some classical results on description of $\DT_3$ moduli space of $V_5$ and $V_{22}$ and universal sheaves on them. Using this, we will compute $\DT_3$ invariants of these Fano 3-folds(equivalently $\DT_4$ invariants of corresponding local CY 4-fold) in Section \ref{section:DTcomp}.

On the other hand, in Section \ref{section:grassGW} we compute twisted Gromov-Witten (GW) invariants on $V_5$ and $V_{22}$ for the cases $\beta = d\cdot[\text{line}]$, $1\leq d \leq 3$, using quantum Lefschetz property \cite{KKP03} and torus localization \cite{GP99}. We will introduce some recipe for computing genus zero GW invariants on Grassmannian varieties, which is a direct analogue of the formula in \cite{GP99, MS} for genus zero GW invariants on projective spaces. From these calculations, we obtain the following main results of the paper.
\begin{theorem}[=Theorem \ref{main1}]
Using the choice of orientation as in \cite[(0.7)]{CT21} on $\DT_4$ moduli spaces on local CY 4-folds, the Conjecture \ref{conj:genus0} holds for $V_5$ and $V_{22}$, $\beta = d\cdot [\text{line}]$, $1 \leq d \leq 3$.
\end{theorem}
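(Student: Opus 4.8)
The plan is to establish the identity $\GW(\gamma)^{X}_{0,\beta} = \sum_{k|\beta} \frac{1}{k^2}\,\DT_4(X)(\beta/k\,|\,\gamma)$ of Conjecture \ref{conj:genus0} directly, degree by degree for $\beta = d\cdot[\text{line}]$ with $d = 1,2,3$, by computing both sides of \eqref{gwdtcor} independently and checking that the resulting numbers match. Since the two equations in \eqref{gwdtcor} are equivalent via the definition of $n_{0,\beta}(\gamma)$ in \cite{KP08}, it is enough to verify the second one; the first then gives $n_{0,\beta}(\gamma) = \DT_4(\beta|\gamma)$ as a corollary. Because $\rH^4$ of the Fano $3$-fold (hence of the relevant local CY $4$-fold) is spanned by a single ample generator in these cases, $\gamma$ ranges over multiples of one class and the statement reduces to a short finite list of numerical identities, one for each $d$.

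For the $\DT_4$ side I would first invoke \cite[Theorem 6.5]{CL14} to identify $M_{\beta}(X)$ with the $\DT_3$ moduli space of $1$-dimensional stable sheaves on $Y \in \{V_5, V_{22}\}$, and \cite{Cao19a, Cao19b} to reduce the $\DT_4$ invariant, computed with the orientation of \cite[(0.7)]{CT21}, to the corresponding $\DT_3$ invariant on $Y$. One then uses the classical descriptions of these moduli spaces in low degree — for $d=1$ the Hilbert scheme of lines, for $d=2,3$ the explicit moduli of $1$-dimensional sheaves built from conics, twisted cubics and their degenerations on $V_5$ and $V_{22}$ — together with an explicit description of the (normalized) universal sheaf $\cF$. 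Substituting $\mathrm{ch}(\cF)$ into $\tau_0(\gamma) = (\pi_M)_*\!\big(\pi_X^*\gamma \cup \mathrm{ch}_{\dim X - 1}(\cF)\big)$ and integrating against $[M_{\beta}(X)]\virt$ yields the $\DT$ numbers; this is the content of Section \ref{section:DTcomp}.

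For the GW side I would compute the genus-zero twisted invariants $\GW(\gamma)^{X}_{0,\beta}$ on $Y$ via the quantum Lefschetz principle \cite{KKP03}: realize $V_5$ (resp.\ $V_{22}$) inside a Grassmannian, twist by the bundle cutting it out, and evaluate the resulting twisted invariants on the ambient Grassmannian by $\CC^*$-localization \cite{GP99}. The essential input is the localization recipe for genus-zero GW invariants of Grassmannians set up in Section \ref{section:grassGW}, an analogue of the projective-space computation in \cite{GP99, MS}: it enumerates the fixed loci of $\overline{M}_{0,n}$ of the Grassmannian in degree $d$, together with their virtual normal bundles and the equivariant Euler classes required by the twist. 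For $d \le 3$ the graph sums are finite and tractable.

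The hard part will be bookkeeping rather than conceptual: on the $\DT$ side, pinning down the normalization of the universal sheaf and the Oh-Thomas virtual class precisely, including the sign fixed by the \cite[(0.7)]{CT21} orientation; on the GW side, organizing the Grassmannian localization graph sum with the Euler-class twist so that the fixed-locus contributions assemble correctly. Once both columns of numbers are available, the theorem is exactly the observation that they obey $\GW(\gamma)^{X}_{0,\beta} = \sum_{k|\beta}\frac{1}{k^2}\,\DT_4(X)(\beta/k\,|\,\gamma)$ for $d = 1,2,3$ — equivalently, that the $n_{0,\beta}(\gamma)$ extracted on the GW side are integers agreeing with $\DT_4(\beta|\gamma)$.
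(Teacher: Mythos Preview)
Your proposal is correct and follows essentially the same approach as the paper: compute the twisted GW invariants of $V_5$ and $V_{22}$ for $d=1,2,3$ via quantum Lefschetz and torus localization on the ambient Grassmannian (Proposition~\ref{gwfano}), compute the $\DT$ numbers via the classical identification of $\bM_d$ with explicit Hilbert schemes and the known resolutions of the universal sheaves (Propositions~\ref{v5dt} and \ref{v22dt}), and then check the multiple-cover identity \eqref{gwdtcor} numerically. The only minor adjustment is that on the $\DT$ side the paper works directly with the $\DT_3$ virtual class of \cite{Tho00} (rather than the Oh--Thomas class) after invoking \cite{Cao19a,Cao19b} for the $\DT_4=\DT_3$ comparison, so the sign/orientation bookkeeping reduces to the factor $(-1)^{c_1(Y)\cdot\beta-1}$ already built into the twisted $\DT_3$ invariant.
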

\begin{theorem}[=Theorem \ref{pfmeeting}]
Using the choice of orientation as in \cite[(0.7)]{CT21} on $\DT_4$ moduli spaces on local CY 4-folds, the Conjecture \ref{conj:genus1} holds for $V_5$ and $V_{22}$, $\beta = d\cdot [\text{line}]$, $1 \leq d \leq 3$ if and only if genus $1$ GV invariants $n_{1,\beta}\equiv 0$.
\end{theorem}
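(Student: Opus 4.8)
\emph{Overview.} The plan is to show that, for $\beta = d[\text{line}]$ with $1\le d\le 3$, Conjecture~\ref{conj:genus1} is equivalent to a purely numerical identity $R_d=0$ whose ingredients are all computed elsewhere in the paper, and then to read off the genus-one invariants from the resulting triangular linear system. Since $\rH^2(X,\ZZ)\cong\rH^2(Y,\ZZ)$ is generated by the ample class $H$ and every term of the formula in Conjecture~\ref{conj:genus1} rescales linearly under $\gamma\mapsto t\gamma$ ($\tau_1$ is linear in $\gamma$, $n_{0,\beta}(\gamma^2)$ is quadratic, and each denominator $\gamma\cdot\beta$ is linear), it suffices to verify the identity for $\gamma=H$; then $\gamma\cdot\beta=d\neq0$, so all denominators are harmless.

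\emph{Inputs.} I would assemble the right-hand side from three computations. (i) By Theorem~\ref{main1} the genus-zero term equals $n_{0,\beta}(\gamma^2)=\DT_4(X)(\beta|\gamma^2)$, which is already evaluated in Section~\ref{section:DTcomp} from the explicit $\DT_3$ moduli of $V_5$ and $V_{22}$. (ii) The meeting invariants: since $d\le3$, the only decompositions of $\beta$ into two nonzero effective classes are $[\text{line}]+[\text{line}]$ and $[\text{line}]+2[\text{line}]$, so only a handful of the numbers $m_{\beta_1,\beta_2}$ are needed, and these follow from the definition recalled in Section~\ref{meeting} together with the genus-zero GW/GV data of Section~\ref{section:grassGW}. (iii) The descendant invariants $\langle\tau_1(\gamma)\rangle_\beta=\int_{[M_\beta(X)]\virt}(\pi_M)_*(\pi_X^*\gamma\cup\mathrm{ch}_4(\cF))$: using \cite[Theorem 6.5]{CL14} one identifies $M_\beta(X)$ with the $\DT_3$ moduli $M_\beta(Y)$ of the corresponding Fano $3$-fold, and, by the dimensional-reduction argument of \cite{Cao19a, Cao19b} with the orientation of \cite[(0.7)]{CT21}, writes $[M_\beta(X)]\virt = e(\mathcal{V})\cap[M_\beta(Y)]$ for the appropriate obstruction bundle $\mathcal{V}$; one then pulls back $\gamma$, inserts the explicit normalized universal sheaf of Section~\ref{section:DTcomp}, computes $\mathrm{ch}_4(\cF)$ by Grothendieck--Riemann--Roch, pushes forward to $M_\beta(Y)$, and integrates.

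\emph{Main obstacle.} I expect step (iii) to be the crux. For $d=1$ the moduli spaces are the smooth Hilbert schemes of lines, but for $d=2,3$ one must work on the classically described moduli of conics, plane cubics and twisted cubics on $V_5$ and $V_{22}$, keep careful track of the universal family and of $\mathcal{V}$ there, and carry out the Chern-class bookkeeping; getting these intersection numbers right is where the real effort lies.

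\emph{Conclusion.} Once (i)--(iii) are in hand, Conjecture~\ref{conj:genus1} for $\beta=d[\text{line}]$ becomes, after dividing through by $\gamma\cdot[\text{line}]$, the scalar equation
\[
\sum_{e\mid d} e\, n_{1,e[\text{line}]} \;=\; R_d,\qquad
R_d := \frac{1}{\gamma\cdot[\text{line}]}\Bigl(\frac{n_{0,\beta}(\gamma^2)}{2d}-\sum_{\beta_1+\beta_2=\beta}\frac{(\gamma\cdot\beta_1)(\gamma\cdot\beta_2)}{4d}\,m_{\beta_1,\beta_2}-\langle\tau_1(\gamma)\rangle_\beta\Bigr),
\]
whose right side $R_d$ is now an explicit rational number, and the remaining task is to check $R_1=R_2=R_3=0$. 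Granting this, the three equations $(d=1,2,3)$ form a triangular system: $d=1$ forces $n_{1,[\text{line}]}=0$, then $d=2$ forces $n_{1,2[\text{line}]}=0$, and $d=3$ forces $n_{1,3[\text{line}]}=0$; conversely, if these three vanish then every equation holds. Hence Conjecture~\ref{conj:genus1} holds for $V_5$ and $V_{22}$ in degrees $d\le3$ exactly when $n_{1,\beta}\equiv0$ there, as claimed — and the vanishing is the expected one, since $V_5$ and $V_{22}$ contain no smooth elliptic curves of degree $\le3$.
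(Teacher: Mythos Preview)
Your proposal is correct and follows essentially the same route as the paper: compute the descendant invariants $\langle\tau_1(H)\rangle_d$ (the tables in Propositions~\ref{v5dt} and~\ref{v22dt}), the genus-zero invariants $n_{0,d}$, and the meeting invariants $m_{1,1},\,m_{1,2}$, then plug everything into the formula of Conjecture~\ref{conj:genus1} and observe that the resulting equations in $n_{1,1},n_{1,2},n_{1,3}$ form a triangular system with unique solution $n_{1,d}=0$. The only ingredient you have slightly understated is step~(ii): the recursion for $m_{\beta_1,\beta_2}$ requires a full basis $\{T_1,T_2\}$ of $\rH^4(\bar X,\ZZ)$ (not just of $\rH^4(Y,\ZZ)$), the intersection pairing $(g_{ij})$ there, and the class $c_2(X)$, all of which the paper works out explicitly in Section~\ref{meeting} before evaluating the meeting invariants; your phrase ``follow from the definition together with the genus-zero data'' is right in spirit but hides this small extra computation.
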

From the fact that the degree of the defining equation of $V_5$ and $V_{22}$ is $\leq 2$ and the varieties does not contain plane, one can check that the smooth elliptic curves of the degree $\leq 3$ in $V_5$ and $V_{22}$ does not exist. Hence the vanishing of GV invariants $n_{1,\beta}$ is desirable.

We have posted all source codes we have used
during the computation of GW invariants and their outputs at the second author's website:
\begin{center}
\href{https://sites.google.com/view/sanghyeon-lee/reference?authuser=0}{https://sites.google.com/view/sanghyeon-lee/reference?authuser=0}
\end{center}

\subsection*{Acknowledgements}
The authors gratefully acknowledge the many helpful suggestions of Yalong Cao and Young-Hoon Kiem during the preparation of the paper. The second named author thanks to Jeongseok Oh and Hyeonjun Park for much advice on Donaldson-Thomas invariants.
\section{Twisted GW invariants on Grassmannians}\label{section:grassGW}
Let $Y$ be a smooth Fano $3$-fold embedded in the Grassmannian variety $G = \Gr(r,d)$, which is a zero section of a bundle over $G$. In this subsection, we present an algorithm of the computation of GW invariants of $|K_Y|$, a total space of a canonical line bundle of $Y$. For any smooth projective variety $X$, we denote the moduli space of stable maps, with  $k$-marked points with degree $\beta \in \rH^2(X,\ZZ)$ by $M_{0,k}(X,\beta)$. When $X = G$, we will denote $M_{0,k}(G,d)$ the stable map space with degree $d\cdot[\text{line}]$. We will usually abbreviate $M_{0,k}(X,\beta)$ by $M(X)$ in the following. In this section, we will introduce classical methods to compute GW invariants, quantum Lefschetz principle and torus localization when the target variety is $|K_Y|$. If you are not interested in this part, you may skip the details and just see computational results in Section \ref{GWcompforFano}.

Consider the forgetful map $p : M=M_{0,k}(G,d) \to \fM_{0,k}$ where $\fM_{0,k}$ is the moduli space of prestable genus $0$ curves with $k$ marked points. We usually will abbreviate $\fM_{0,k}$ by $\fM$.
There is a usual (relative) perfect obstruction theory \cite{Beh96} $E_{M(X)/\fM} \to L_{M(X)/\fM}$ of $M(X) \to \fM$, where
\begin{align}\label{GWpot}
E_{M/\fM} := [R\pi_* f^* T_G]\dual \in D^b(M(X)).
\end{align}
Also, as in \cite{BF97} the virtual cycle correspond to this perfect obstruction theory is given by:
\begin{align}\label{virtcycle}
[M(X)]\virt := 0^!_{h^1/h^0(E_{M/\fM})}[\cC_{M(X)/\fM}] \in A_{\mathrm{vdim}}(M(X))
\end{align}
where $h^1/h^0(E_{M/\fM})$ is a vector bundle stack correspond to $E_{M/\fM}$, $\cC_{M(X)/\fM}$ is the (relative) intrinsic normal cone defined in \cite{BF97}, and $\mathrm{vdim}$ is given by
\[
\mathrm{vdim} = (1 - g)(\dim(X) - 3) + k - \int_{\beta} c_1(K_X)
\]
which is called virtual dimension.

\subsection{Quantum Lefschetz principle}\label{section:QLef}
We will briefly review some aspect of famous quantum Lefschetz principle \cite{KKP03} in this section. We consider a negative vector bundle $E$ on $G$, so that $\rH^0(G,f^*E)=0$ for any non-constant morphism $f : \PP^1 \to G$.
Let $|E|$ be the total space of $E$ and $p : |E| \to G$ be the projection. Consider a stable map $f : C \to |E|$. Then $p\circ f : C \to G$ is a stable map and $f$ induces an element of $\rH^0(C, (p\circ f)^*E)$. But since $E$ is a direct sum of negative degree line bundles, $\rH^0(C, (p\circ f)^*E) = 0$. 
Hence we have a natural isomorphism of moduli spaces of stable maps:
\[
M_{0,k}(|E|,d) \cong M_{0,k}(G,d).
\]
Note that $M_{0,k}(G,d)$ is well known to be smooth. Also, we have the short exact sequence: 
\[
0 \to p^*T_G \to T_{|E|} \to p^*E \to 0. 
\]
From the condition that $E$ is negative and the fact that $\rR^1\pi_* (f^* p^* T_G) = 0$, we obtain the following by taking the higher direct image functor $R \pi_* f^*$ to the above short exact sequence.
\[
E_{M(|E|)/\fM}\dual = \rR\pi_* T_{|E|} = \left[ \pi_*(f^*p^* T_G) \stackrel{0}{\longrightarrow} R^1\pi_*(f^*p^*E)  \right]
\]
Note that $\pi_*(f^*p^*T_G) \cong T_{M(G)}$. We can easily check that the intrinsic normal cone $\cC_{M(G)/\fM}$ is $[M(G) / T_{M(G)/\fM}]$ and $h^1/h^0(E\dual) = [R^1\pi_*(f^*p^*E) / T_{M(G)/\fM}]$.
From the definition of the virtual cycle in \eqref{virtcycle}, we have
\begin{align*}
[M(G)]\virt & = 0^!_{[R^1\pi_*(f^*p^*E) / T_{M(G)/\fM}]}[M(G) / T_{M(G)/\fM}] \\
& = 0^!_{R^1\pi_*(f^*p^*E)}[M(G)] \\
& = e( \rR^1\pi_*(f^* p^*E) ) \cap [M(G)]
\end{align*}
where the second identity comes from properties of Gysin pull-backs via bundle stacks in \cite{Kre99}.
We call this phenomenon \textbf{B-twist}. Note that this phenomenon also arises when we replace $G$ by any projective variety.
Using this, we define a \emph{twisted GW invariants} of a Fano variety $X$ by:
\begin{definition}[Twisted GW invariant]
The twisted Gromov-Witten invariant a Fano variety $X$ is defined by the integration
\[\GW^{\text{twist}}_{0,\beta}(X)(\gamma):=\int_{[M_{0,1}(X,\beta)]^{\mathrm{vir}}} e(\rR^1\pi_*f^*K_X)\cup \mathrm{ev}^*(\gamma) \in \QQ\]
where $\mathrm{ev}: M_{0,1}(X,\beta) \lr X$ be the evaluation map.
\end{definition}
Note that twisted GW invariants are usually considered as a definition of GW invariants of the total space $|K_X|$, because we can not define it directly since $|K_X|$ is not compact.

Next we consider a complete intersection in $G=G(r,n)$. Let $E$ be the vector bundle on $G$ and let $Y \subset G$ be a zero section of the generic section $s : \cO_G \to E$. Assume that $E$ is convex, so that $\rH^1(G, \vphi^*E) = 0 $ for any non-constant morphism $\vphi : \PP^1 \to G$. For example, the direct sum $\bigoplus_i \cO(a_i)$, $a_i > 0$ of line bundles on $G$ is a convex vector bundle.
From the convexity of $E$, $\pi_* f^*E$ is locally free, where $\pi : \cC \to M$ is the universal curve and $f : \cC \to G$ is the universal morphism. Let $M(Y)$ denote the stable map space $M_{0,k}(Y,d)$. Then we have
\[
E_{M(Y)/\fM}\dual = \rR\pi_* f^* T_Y = [\pi_*f^* T_G \to \pi_* f^* E] = [T_{M} \to \pi_* f^* E].
\]

Consider the section $s : \cO_G \to E$ and the induced section $\wtil{s} : M \to  \pi_*f^*E $ defined by $\wtil{s}([(C,f)]) = (C, f^*s) \in H^0(C, f^*E )$. Then we have $Z(\wtil{s}) = M(Y) \subset M$.
We can easily check that $\cC_{M/\fM} = [C_{M(Y)/M} / T_{M/\fM}]$ and $h^1/h^0(E\dual) = [\pi_* f^* E / T_{M/\fM}]$.
Then by the definition of the virtual cycle, we have
\[
[M(Y)]\virt = 0^!_{[\pi_* f^* E / T_M]} [C_{M(Y)/M} / T_{M/\fM}] = 0^!_{\pi_* f^* E} [C_{M(Y)/M}] \in A_*(M(Y)).
\]
Also we have
\[
\iota_*[M(Y)]\virt = e( \pi_* f^*E ) \cap [M] \in A_*(M)
\]
where $\iota : M(Y) \hra M$ is the inclusion.
See \cite{KKP03} for the proof of the general case. We call this phenomenon \textbf{A-twist}. 
\subsection{Torus localization}\label{section:Torloc}
We will briefly review some aspect of torus localization \cite{GP99} in this section. Again we consider the Grassmannian variety $G = \Gr(r,d)$ and the stable map space $M = M_{0,k}(G,d)$.
We give a $\CC^*$-action on $\CC^n$ with weights $-\alpha_1,\dots, -\alpha_n$, which induces the $\CC^*$-action on $\Gr(r,n)$ and $M$. 
Let $M^F \subset M$ be the fixed locus of the action and let $M^F =  \bigcup_i M^F_i$ be the irreducible decomposition.
Note that $E_{M/\fM}\dual$ has an induced $\CC^*$-action and we have a decomposition
\[
E_{M/\fM}\dual|_{M^F_i} \cong N^{\mathrm{fix}} \oplus N\virt
\]
where $N^{\mathrm{fix}}$ has weight $0$ under the $\CC^*$-action and $N\virt$ is a direct sum of vector bundles with non-zero weights.

Let $(A^T)_*(M)$ (resp. $(A^T)^*(M)$) be the equivariant Chow group (resp. equivariant Chow cohomology group) of  $M$ and  $e^T (E)$ be the equivariant Euler class of a locally free sheaf $E$. If $E_M\dual$ has locally free resolution $[E_0 \to E_1]$, we define the (equivariant) Euler class by
\[
e^T(N\virt) = \frac{e^T(E_0^m)}{e^T(E_1^m)} \in (A^T)^*(M) \otimes \QQ[t,1/t].
\]
By the virtual localization theorem in \cite{GP99}, we have
\[
[M]\virt = \sum_{i} \frac{[M^F_i]\virt}{e^T(N\virt)} \in A^T_*(M) \cong (A^T)_*(M)\otimes \QQ[t,1/t].
\]
\subsection{Computation of the virtual normal bundle}\label{section:Virtnor}
For $M=M_{0,k}(G,d)$, we can do more specific computation. In a similar manner as in \cite{GP99} and \cite{MS}, which dealt with the case $G = \PP^N$, fixed loci $M^F_i$ are indexed by decorated graphs $\Gamma$. We denote $M^F_i$ by $M_{\Gamma}$ for the corresponding decorated graph $\Gamma$. Note that $M_{\Gamma}$ is smooth and thus $[M_{\Gamma}]\virt = [M_{\Gamma}]$. Also $N\virt$ denotes the virtual normal bundle defined in \cite{BF97}.

For a stable map $[(C,x_1,\dots, x_k,f)] \in M_{\Gamma}$, the fiber of the (K-theoretic) virtual normal bundle is given by the moving part of $\Ext^1(\Omega_C(x_1+\dots,x_k), \cO_C ) - \Ext^0(\Omega_C(x_1+\dots,x_k), \cO_C ) + (\rH^0 - \rH^1)(f^*T_G)$. 
By \cite{GP99} and \cite{MS}, we have
\[
e^T( \Ext^0(\Omega_C(x_1+\dots,x_k), \cO_C ) ) = \prod_{\substack{v \in \mathrm{Vertices} \\ \text{val}(v)=1 } }\omega_{F_v} 
\]
and
\[
e^T(\Ext^1(\Omega_C(x_1+\dots,x_k), \cO_C )) = \prod_{\substack{F \in \mathrm{Flags} \\ \mathrm{valency}(i(F)) \geq 3} }(\omega_F - e_F),
\]
where $\omega_F := \frac{\alpha_{i(F)} - \alpha_{j(F)} }{d_e}$ and $e_F$ is the $\psi$-class correspond to the flag $F$. Also, by \cite{GP99} and \cite{MS}, we have the following by using the normalization sequence of nodal curves.
\begin{align}\label{decomp}
(\rH^0-\rH^1)(f^*T_G) & = \bigoplus_{v \in \mathrm{Vertices}} T_{p_v } G + \bigoplus_{e \in \mathrm{Edges}} \rH^0(C_e, f^*T_G) \\ \nonumber
& - \bigoplus_{F \in \mathrm{Flags}}T_{p_{i(F)}} G - \bigoplus_{v \in \mathrm{Vertices}} \rH^1(C_v, f^*T_G)  
\end{align}
Then the equivariant Euler classes of their moving part are given by the followings. For the first term of \eqref{decomp}, if $p_v = x_{u_1,\dots,u_r} = \langle e_{u_1},\dots,e_{u_r} \rangle \in \Gr(r,n)$, then
\[
e^T(T_{p_{i(v)}} G ) = \prod_{1 \leq j \leq r}\prod_{k \in [n]\setminus \{ u_1,\dots,u_r \} } ( \alpha_{u_j} - \alpha_k ).
\]
For the third term of \eqref{decomp}, if $p_{i(F)} = x_{u_1,\dots,u_r} = \langle e_{u_1},\dots,e_{u_r} \rangle \in \Gr(r,n)$, then we have the same formula as above :
\[
T_{p_{i(F)}} = \prod_{1 \leq j \leq r}\prod_{k \in [n]\setminus \{ u_1,\dots,u_r \} } ( \alpha_{u_j} - \alpha_k ).
\]
For the fourth term of \eqref{decomp}, by \cite{MS}, we have the following. If valency($v$)$=2$ and there is no marking on $v$, then
\[
e^T(\rH^1(C_v, f^*T_G)) = \omega_{F_{v,1}} + \omega_{F_{v,2}}.
\]
Otherwise, we have
\[
e^T(\rH^1(C_v,f^*T_G))=1.
\]
For the second term of \eqref{decomp}, consider the Euler sequence:
\begin{align}\label{Euler}
0 \to S\dual \otimes S \to S\dual \otimes \cO_G^{\oplus n} \to S\dual \otimes Q = T_G \to 0
\end{align}
where $S,Q$ are tautological bundle and universal quotient bundle on $G$. Take pull-back via the map $f : C_e \to G$. Let $e = \{v_1, v_2 \}$, $p_{v_1} = x_{u_1,\dots,u_{r-1}, u_{a}}$ and $p_{v_2} = x_{u_1,\dots,u_{r-1},u_{b}}$. Note that $u_{a} \neq u_{b}$. Then we have 
$f^*(S\dual) \cong \cO(\alpha_{u_1})\oplus...\oplus \cO(\alpha_{u_1}) \oplus \cO(d_e)$
where $\cO(\alpha_i)$ is an equivariant trivial bundle where $(\CC^*)^n$ acts on it with a weight $\alpha_i$. Note that $\rH^0(C_e, \cO(d_e))$ has weights $\frac{c_1 \alpha_{u_a} + c_2\alpha_{u_b}}{d_e}$ for $c_1+c_2 = d_e$. Let $f|_{C_e} =: f_e$. By taking $f_e^*$ and the cohomology in \eqref{Euler}, we have the following exact sequence:
\[
0 \to \rH^0(f_e^* (S\dual \otimes S) ) \to \rH^0( (f_e^* S\dual \otimes \cO_G^{\oplus n} ) ) \to \rH^0(f_e^* T_G) \to \rH^1( (f_e^* S\dual \otimes S ) ) \to 0.
\]
By a direct calculation, we have
\begin{align*}
&e^T( (\rH^1-\rH^0)( f_e^* (S\dual \otimes S) ) ) \\
& = \frac{ (-1)^{d_e-1} }{ \prod_{1 \leq i \leq r-1} (\alpha_{u_a}-\alpha_{u_i})(\alpha_{u_b}-\alpha_i) \prod_{1 \leq i < j \leq r-1} (\alpha_{u_i} - \alpha_{u_j})^2 }.  
\end{align*}
Also we have,
\begin{align*}
e^T( (\rH^1 - \rH^0)(f_e^*S\dual \otimes \cO^{\oplus n}_G) ) & = \prod_{1 \leq j \leq r}\prod_{k \in [n]\setminus \{ u_1,\dots,u_r \} } ( \alpha_{u_j} - \alpha_k ) \\
& \times \prod_{k \in [n]\setminus \{ u_1,\dots,u_r \}} \prod_{\substack{c_1,c_2\neq 0 \\ c_1+c_2=d_e \\ (c_1,k) \neq (0,u_b), \, (d_e,u_a)} } (\frac{c_1 \alpha_{u_a} + c_2\alpha_{u_b}}{d_e} - \alpha_{k} ). 
\end{align*}

\subsection{Computation of GW invariants on Fano $3$-folds}\label{GWcompforFano}

Combining above arguments, we have expression of $e^T(N\virt|_{M_{\Gamma}})$. Next we represent A-twist and B-twist in section \ref{section:QLef} in equivariant cohomology. Let $F_1:=\pi_* f^* \left( \bigoplus_i \cO(a_i) \right)$ and $F_2:=R^1\pi_* f^* \cO(-b)$ where $b$ is the Fano index of $X$, $\pi : \cC \to M(X)$ is the universal curve and $f : \cC \to X$ is the universal morphism. We have
\begin{align}\label{localization+QLef}
\iota_*[M(X)]\virt =  \sum_{\Gamma} \frac{1}{|A_{\Gamma}|} \frac{e^T(F_1|_{M_{\Gamma}})\cup e^T(F_2|_{M_{\Gamma}})}{e^T(N\virt)} [M_{\Gamma}] \in (A^T)_*(M)\otimes \QQ[t,1/t]
\end{align}
where $A_{\Gamma}$ is the order of the automorphism group of a generic element in $M_{\Gamma}$. We have $|A_{\Gamma}| = |\mathrm{Aut}(\Gamma)| \cdot \prod_{e \in \mathrm{Edges}} d_e$ where $\mathrm{Aut}(\Gamma)$ is the automorphism group of the decorated graph $\Gamma$. One can check more detail on the group $A_{\Gamma}$ in \cite{GP99}.
Note that we can find specific expressions of $e^T(F_1|_{M_{\Gamma}})$ and $e^T(F_2|_{M_{\Gamma}})$ using the normalization sequence:
\begin{align}\label{normalseq}
0 \to \cO_C \to \bigoplus_{v \in \mathrm{Verteces}} \cO_{C_v} \oplus \bigoplus_{e \in \mathrm{Edges}} \cO_{C_e}
 \to \bigoplus_{\substack{F \in \mathrm{Flags}} } \CC_{x_F} \to 0.  
\end{align}
Combining arguments in section \ref{section:Torloc}, \ref{section:Virtnor}, \ref{section:QLef} and Hodge integrals computed in \cite{FP00}, we can express the right hand side terms of \eqref{localization+QLef} by formal weights of $\CC^*$-action. Therefore, we can compute genus 0 Gromov-Witten invariants of the total space of the canonical line bundle over the Fano 3-fold, which is a zero section of an equivariant vector bundle over a Grassmannian variety.

In this paper, we consider two cases: (a) $Y = V_5$ and (b) $Y = V_{22}$. In case (a), $F_1 = \pi_* f^* \cO(1)^{\oplus 3}$ and $F_2  = \rR^1\pi_* f^* \cO(-2)$. In case (b), $F_1 = \pi_* f^* (\wedge^2 S\dual )$ and $F_2 =\rR^1\pi_* f^* \cO(-1)$. The actual computation has been done by a computer program. Firstly we make a dataset of all possible decorated graphs $\Gamma$ and their information. Secondly, using this dataset, we make a code computing the right hand side terms in \eqref{localization+QLef} for each localization graph $\Gamma$ and adding up them. As a result, we obtain the following table are twisted GW invariants.
\begin{proposition}[Twisted GW invariants]\label{gwfano}
The twisted GW invariants for $Y=V_5$ and $V_{22}$ are given by the numbers of the following table.
\begin{center}
\begin{tabular}{|l||M{3.2cm}|M{3.2cm}|}
\hline
$d$&$\GW^{\text{twist}}_{0,d}(V_5)(h_2)$& $\GW^{\text{twist}}_{0,d}(V_{22})(h_2)$\\
\hline
\hline
$1$&$-5$&$2 $\\
\hline
$2$&$-36\frac{1}{4}$&$-6\frac{1}{2}$\\
\hline
$3$&$-490\frac{5}{9}$&$28\frac{2}{9}$\\
\hline
\end{tabular}
\end{center}
Here $h_2$ is the generator of $\rH^4(Y,\ZZ) \cong \ZZ$ .
\end{proposition}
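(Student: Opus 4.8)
The plan is to derive each of the six numbers by running the localization recipe of Sections \ref{section:Torloc}--\ref{GWcompforFano} with the specific input data for $V_5$ and $V_{22}$, and then organize the output into the table. The two cases differ only in the ambient Grassmannian and in the twisting bundles $F_1, F_2$ of \eqref{localization+QLef}: for $V_5$ we take $G = \Gr(2,5)$ with $F_1 = \pi_* f^* \cO(1)^{\oplus 3}$ (Pl\"ucker polarization) and $F_2 = \rR^1\pi_* f^* \cO(-2)$ since the Fano index of $V_5$ is $2$, while for $V_{22}$ we take $G = \Gr(3,7)$ with $F_1 = \pi_* f^*(\wedge^2 S\dual)$ and $F_2 = \rR^1\pi_* f^* \cO(-1)$ since the Fano index is $1$. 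In each case $\beta = d\cdot[\text{line}]$ is the class pulled back from the line class of $G$ under the Pl\"ucker embedding, and the insertion $h_2 \in \rH^4(Y,\ZZ)$ must be expressed, via the evaluation map, as the restriction of an explicit equivariant class on $G$ (a symmetric polynomial in the Chern roots of $S\dual$), so that its equivariant lift evaluates at each fixed point $p_v = x_{u_1,\dots,u_r}$ to a known polynomial in the weights $\alpha_i$.

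First I would set up, for each $d \in \{1,2,3\}$, the finite list of decorated graphs $\Gamma$ indexing the $\CC^*$-fixed loci $M_\Gamma \subset M_{0,1}(G,d)$: vertices labelled by torus-fixed points $x_{u_1,\dots,u_r}$ of the Grassmannian, edges labelled by degrees $d_e$ summing to $d$ with the incidence constraint that the two endpoints of an edge differ in exactly one Pl\"ucker index, and a single marked point distributed over the vertices. Second, for each $\Gamma$ I would assemble $e^T(N\virt|_{M_\Gamma})$ from the pieces computed in Section \ref{section:Virtnor}: the vertex contributions $e^T(T_{p_v}G) = \prod_j \prod_{k\notin\{u_1,\dots,u_r\}}(\alpha_{u_j}-\alpha_k)$, the flag and valency-two corrections $\omega_F - e_F$ and $\omega_{F_{v,1}}+\omega_{F_{v,2}}$, and the edge contributions obtained from the Euler-sequence computation, i.e.\ the product of $e^T((\rH^1-\rH^0)(f_e^*(S\dual\otimes S)))$ and $e^T((\rH^1-\rH^0)(f_e^*S\dual\otimes\cO^{\oplus n}))$ as displayed at the end of Section \ref{section:Virtnor}. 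Third, I would compute $e^T(F_1|_{M_\Gamma})$ and $e^T(F_2|_{M_\Gamma})$ by restricting along the normalization sequence \eqref{normalseq}, so that each global section/cohomology splits into vertex, edge and flag pieces whose weights are explicit; the edge pieces for a degree-$d_e$ line in $\cO(1)$ (resp.\ $\wedge^2 S\dual$, resp.\ $\cO(-b)$) are products over $c_1+c_2 = d_e$ of the corresponding linear forms in $\alpha_{u_a},\alpha_{u_b}$. Fourth, I would integrate \eqref{localization+QLef} against $\mathrm{ev}^*(h_2)$ over $[M_\Gamma]$, using the Hodge integrals of \cite{FP00} to evaluate the $\psi$-class (i.e.\ $e_F$) terms, divide by $|A_\Gamma| = |\mathrm{Aut}(\Gamma)|\cdot\prod_e d_e$, and sum over $\Gamma$; by the localization theorem the $t$-dependence must cancel, leaving a rational number, which is $\GW^{\text{twist}}_{0,d}(Y)(h_2)$.

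Since the paper states the actual arithmetic was carried out by computer, the write-up of this proposition is really a verification-and-assembly task: the \emph{conceptual} content is entirely in Sections \ref{section:QLef}--\ref{section:Virtnor}, and what remains is (i) pinning down the two sets of input data $(G, F_1, F_2, h_2)$ correctly, (ii) confirming that the graph enumeration for $d\le 3$ is complete and that the automorphism factors $|A_\Gamma|$ are right, and (iii) checking the nontrivial internal consistency test that all poles in $t$ cancel. The main obstacle I anticipate is bookkeeping rather than theory: for $V_{22}$ the ambient space is $\Gr(3,7)$, so the vertex Euler classes involve products over a $3$-element subset against a $4$-element complement and the twisting bundle $\wedge^2 S\dual$ has rank $3$, which makes the edge-level section count and its weights considerably bulkier than the $\PP^N$ or $\Gr(2,n)$ cases; getting the $\wedge^2 S\dual$ weights on a degree-$d_e$ edge exactly right (and matching them against the independently known degree-$1$ value, which should reproduce the classical line count and fix $\GW^{\text{twist}}_{0,1}(V_{22})(h_2)=2$) is where an error would most easily creep in. A useful sanity check at $d=1$ is that $M_{0,1}(Y,1)$ is the universal line over the Hilbert scheme of lines, whose structure on $V_5$ and $V_{22}$ is classical, so the $d=1$ entries $-5$ and $2$ can be cross-verified without localization, pinning the normalization before trusting the $d=2,3$ outputs.
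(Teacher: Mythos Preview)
Your proposal is correct and follows essentially the same approach as the paper: the proposition is stated as the output of the localization-plus-quantum-Lefschetz algorithm developed in Sections \ref{section:QLef}--\ref{section:Virtnor}, with the specific input data $(G,F_1,F_2)$ for $V_5$ and $V_{22}$ exactly as you list, and the paper likewise defers the arithmetic to a computer program. Your added $d=1$ sanity checks against the classical Hilbert schemes of lines are not in the paper's discussion of this proposition but are consistent with the DT-side computations in Section \ref{section:DTcomp}.
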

\begin{remark}
The degree $4$ twisted GW invariant on $V_5$ is given by $\GW^{\text{twist}}_{0,4}(V_5)(\gamma_2)=-8829\frac{1}{16}$. In principle we can compute GW invariants for higher degrees, but the time taken for the calculation super-exponentially increases. 
\end{remark}
\section{Donaldson-Thomas type invariants}
In this section, we compute DT invariants and descendant invariants for some local Fano 3-folds $|K_{V_5}|$ and $|K_{V_{22}}|$ for degree $1 \leq d\cdot [\textrm{line}] \leq 3$. We will abbreviate $d\cdot [\textrm{line}]$ by $d$.
In these cases, $M_{d}$ naturally isomorphic to moduli space of stable sheaves on $|K_Y|$. 

\begin{definition}[Twisted DT invariant]
Let $\bM_{\beta}=\bM_{\beta}(Y)$ be the moduli space of stable sheaves $F$ on $Y$ with $[F]=\beta\in H_2(Y,\ZZ)$ and $\chi(F)=1$. Let
\[
\tau_0:\rH^4(Y,\ZZ)\lr\rH^2(\rM_{\beta},\ZZ),\;\tau_0(\gamma)=\pi_{M*}(\pi_X^*\gamma\cup \mathrm{ch}_2(\cF))
\]
be the \emph{primary insertion} of $\gamma\in \rH^4(Y,\ZZ)$. Here $\cF$ is the universal sheaf and the maps $\pi_{M_\beta}$, $ \pi_Y$ are the canonical projection maps.
The \emph{twisted} genus zero DT invariant is defined by
\[
\DT_3^{\text{twist}}(Y)(\beta|\gamma):= (-1)^{c_1(Y)\cdot \beta-1}\int_{[\bM_{\beta}(Y)]^{\mathrm{vir}}} \tau_0(\gamma) \in \ZZ.
\]
where $[\bM_{\beta}(Y)]^{\mathrm{vir}}$ is the virtual class defined in \cite[Corollary 3.39]{Tho00}.
\end{definition}
Since $\DT_4$ invariant of $|K_Y|$ is equal to $\DT_3$ invariant of $Y$, it is enough to compute twisted $\DT_3$ invariant.

Note that if the moduli space $\bM_{\beta}(Y)$ is smooth, the virtual cycle is the Poincar\'e dual of the top Chern class of the obstruction bundle.
Combining computation of GW invariants in Section \ref{section:grassGW} and DT invariant computation in this section, we will check Conjecture \ref{conj:genus0} for $1\leq d \leq 3$, which can be rewritten by

\begin{conjecture}\label{gw-dtconj}
For the cohomology class $\gamma\in \rH^4(Y,\ZZ)$,
\[
n_{0,\beta}(\gamma)=\DT_4(|K_Y|)(d |\gamma)
\]
and
\[
\GW_{0,\beta}(Y)^{\text{twist}}(\gamma)= \sum_{k|\beta}\frac{1}{k^2} \cdot \DT_4(|K_Y|)(d/k|\gamma).
\]
\end{conjecture}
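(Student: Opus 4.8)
The twisted GW invariants $\GW^{\text{twist}}_{0,d}(Y)(h_2)$ are already in hand (Proposition~\ref{gwfano}), so I would reduce the statement to a DT-side computation together with a numerical comparison. By \cite[Theorem~6.5]{CL14} the $\DT_3$ moduli space $\bM_d(Y)$ is isomorphic to the $\DT_4$ moduli space of $|K_Y|$, and by \cite{Cao19a,Cao19b}, with the orientation of \cite[(0.7)]{CT21}, the $\DT_4$ invariant of $|K_Y|$ equals $D_d:=\DT_3^{\text{twist}}(Y)(d\,|\,h_2)=(-1)^{c_1(Y)\cdot d-1}\int_{[\bM_d(Y)]\virt}\tau_0(h_2)$; so it suffices to evaluate the latter. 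Recalling (as noted after Conjecture~\ref{conj:genus0}) that the two identities of Conjecture~\ref{gw-dtconj} are equivalent via the definition of $n_{0,\beta}$ in \cite{KP08} --- the first being that definition --- the content I must check is $\GW^{\text{twist}}_{0,1}(Y)(h_2)=D_1$, $\GW^{\text{twist}}_{0,2}(Y)(h_2)=D_2+\tfrac14 D_1$, and $\GW^{\text{twist}}_{0,3}(Y)(h_2)=D_3+\tfrac19 D_1$, after which one sets $n_{0,d}(h_2):=D_d$.

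\textbf{The moduli spaces and universal sheaves.} I would first identify $\bM_d(Y)$ together with its normalized universal sheaf $\cF$. For $d=1$, stability forces a sheaf of class $[\text{line}]$ with $\chi=1$ to be $\cO_L$ for a line $L\subset Y$, so $\bM_1(Y)$ is the Hilbert scheme of lines ($\PP^2$ for $V_5$, a smooth curve for the relevant $V_{22}$), with $\cF$ the structure sheaf of the universal line. For $d=2,3$ I would analyse stable sheaves of class $d\,[\text{line}]$ with $\chi=1$: a short destabilizing-subsheaf estimate rules out any such sheaf having generic rank $\ge2$ on a single line, so for $d=2$ every such sheaf is $\cO_C$ for a (possibly reducible or non-reduced) conic and $\bM_2(Y)$ is the Hilbert scheme of conics, while for $d=3$ one gets the Hilbert scheme of twisted cubics, possibly together with extra components of sheaves of mixed generic rank on a reducible support. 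In all cases I would invoke the classical descriptions of these Hilbert schemes --- and of $\bM_d(Y)$ --- on $V_5$ and on the Mukai--Umemura $V_{22}$, with their explicit universal families.

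\textbf{Virtual class and integration.} Since $Y$ is Fano, $\rH^{>0}(\cO_Y)=0$, so for a stable $1$-dimensional $F$ with $\chi(F)=1$ one has $\dim\Hom(F,F)=1$, $\Ext^3(F,F)=\Hom(F,F\otimes K_Y)\dual=0$ (the reduced Hilbert polynomial strictly drops under $\otimes\,K_Y$), and $\chi(F,F)=\int_Y\mathrm{ch}(F)\dual\,\mathrm{ch}(F)\,\td(Y)=0$; hence the virtual dimension is $\dim\Ext^1(F,F)-\dim\Ext^2(F,F)=1$. I would then verify that $\bM_d(Y)$ is smooth for $d\le3$, so that $[\bM_d(Y)]\virt=e(\mathrm{Ob})\cap[\bM_d(Y)]$ with $\mathrm{Ob}$ the obstruction bundle (fibre $\Ext^2(F,F)$) of rank $\dim\bM_d(Y)-1$; for $d=1$ the identity $\cExt^i(\cO_L,\cO_L)=\wedge^i N_{L/Y}$ makes $\mathrm{Ob}$ explicit. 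Finally, from $\cF$ I would extract $\tau_0(h_2)=\pi_{M*}\left(\pi_Y^*h_2\cup\mathrm{ch}_2(\cF)\right)\in\rH^2(\bM_d(Y))$ and the Chern classes of $\mathrm{Ob}$, and compute $D_d=(-1)^{c_1(Y)\cdot d-1}\int_{\bM_d(Y)}e(\mathrm{Ob})\cup\tau_0(h_2)$.

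\textbf{Comparison and the main obstacle.} Substituting the resulting integers into the three identities above and matching them with the entries of Proposition~\ref{gwfano} completes the verification (one expects $D_1=-5$, $D_2=-35$, $D_3=-490$ for $V_5$ and $D_1=2$, $D_2=-7$, $D_3=28$ for the Mukai--Umemura $V_{22}$), and then $n_{0,d}(h_2):=D_d$ together with the Klemm--Pandharipande multiple-cover formula yields the statement as phrased. The hard part will be the input of the middle two steps for $d=2,3$: describing $\bM_d(Y)$ and its universal sheaf once the support degenerates to a singular or non-reduced curve (and, for $d=3$, possibly to a component of mixed-rank sheaves), establishing smoothness --- if it fails along some degeneration locus, one would instead have to localize the virtual class there rather than use $e(\mathrm{Ob})$ --- and, most laboriously, computing $\mathrm{ch}(\cF)$ and $c(\mathrm{Ob})$ in closed form from an explicit universal family so that the final intersection numbers can actually be evaluated; by contrast the $d=1$ case is essentially immediate.
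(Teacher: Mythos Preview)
Your strategy is the paper's strategy: reduce $\DT_4(|K_Y|)$ to the twisted $\DT_3$ integral on $Y$, identify $\bM_d(Y)$ with the Hilbert scheme of degree-$d$ rational curves, compute $\tau_0(h_2)$ and the obstruction class from an explicit universal family, and match the resulting integers against Proposition~\ref{gwfano}. The paper does not derive the universal sheaves or the obstruction bundle from scratch; it imports the locally free resolutions of $i_*\cO_{\cC_d}$ from \cite{San14,Fae14,KS04} and then evaluates $[\mathrm{ob}_{\bM_d}]=[T_{\bM_d}]+[\mathbf{R}\pi_{\bM_d *}\mathbf{R}\cHom(i_*\cO_{\cC_d},i_*\cO_{\cC_d})[1]]-[\cO]$ in $K$-theory with \texttt{Macaulay2}. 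Your predicted values $D_d$ are exactly what the paper finds, and the extra components you worry about for $d=3$ do not occur: $\bM_d(Y)\cong\bH_d(Y)$ for $d\le 3$ by \cite[Proposition~3.1]{Chu19}.

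There is one genuine slip in your $d=1$ step for $V_{22}$. The Hilbert scheme of lines on $V_{22}$ is a plane quartic $Q\subset\PP^2$ which is \emph{never} smooth (cf.\ \cite{KPS18}); so ``a smooth curve for the relevant $V_{22}$'' cannot be arranged, and choosing the Mukai--Umemura $V_{22}$ makes matters worse rather than better (there $Q\subset\PP^2$ fails even to be a regular embedding). The paper's fix is to invoke deformation invariance and take a \emph{non}-Mukai--Umemura $V_{22}$, so that $Q\subset\PP^2$ is lci; then the intrinsic normal cone $\cC_{\bM_1}=[N_{Q/\PP^2}/T_{\PP^2}|_Q]$ and $h^1/h^0(E\dual)$ are both bundle stacks of dimension $0$, hence the closed embedding $\cC_{\bM_1}\hookrightarrow h^1/h^0(E\dual)$ is an equality and $[\bM_1]\virt=[\bM_1]$ without any smoothness of $Q$. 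After that, $\tau_0(h_2)$ and $\tau_1(h_1)$ are read off via Porteous from the degeneracy-locus description of $\cC_1$ in \cite{AF06,Pir85}. Your stated fallback (localizing the virtual class) would also handle this, but you flagged it only for $d=2,3$; the actual obstruction to your smoothness plan already appears at $d=1$.
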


On the other hand, Cao and Toda suggest genus one GV-type invariant on CY $4$-fold $X$ by using the descendent insertion in \cite{CT21}. 

\begin{definition}[Descendent insertion]
For an integral class $\gamma\in \rH^{2}(X,\ZZ)$, let us define the \emph{descendent insertion} as
\[
\tau_{1}:\rH^{2}(X,\ZZ)\lr\rH^2(\rM_{\beta},\ZZ),\;\tau_{1}(\gamma)=\pi_{M*}(\pi_X^*\gamma\cup \mathrm{ch}_{4}(\cF_{\text{norm}})),
\]
where $\cF_{\text{norm}}:=\cF\otimes \pi_M^*(\pi_{M*}\det(\cF))^{-1}$ is the normalized universal sheaf of the universal sheaf $\cF\in \text{Coh}(\rM_{\beta}\times X)$
and the maps $\pi_{M}$ and $ \pi_X$ are the canonical projection maps from $M_{\beta} \times X$ into $M_{\beta}$ and $X$ respectively.
\end{definition}
Descendant invariants from the \emph{descendent insertion}s are defined by
\[
\langle \tau_1(\gamma)\rangle_{\beta}:=\int_{[\bM_{\beta}]^{\mathrm{vir}}} \tau_1(\gamma). 
\]
\begin{remark}
 For $\gamma\in \rH^{4-2i}(X,\ZZ)$, the insertion becomes
\[
\tau_{i}(\gamma)=\pi_{M*}(\pi_Y^*\gamma\cup \{\mathrm{ch}(\cF_{\text{norm}})\cdot \text{td}(K_Y)^{-1}\}_{2+i})
\]
by Grothendieck-Riemann-Roch theorem.
\end{remark}
By computing descendant invariants, we can obtain genus 1 GV invariants $n_{1,\beta}$ via Conjecture \ref{conj:genus1}.
\subsection{Computations on Fano $3$-folds}\label{section:DTcomp}
It is very well-known that the following list of smooth Fano $3$-folds have the same Betti numbers of that of $\PP^3$:
\[
\PP^3, \;Q_2\subset \PP^4, \;V_5\subset \PP^6, \;V_{22}\subset \PP^{13}.
\]
In special, the odd cohomology of these varieties vanish. All of varieties are rigid except $V_{22}$. The moduli of $V_{22}$'s is six-dimensional. The first two varieties in the list are homogeneous and the others are not. In this section, we compute the primary and descent invariants for non-homogeneous cases.
\subsubsection{The case $V_5$}\label{casev5}
The Fano threefold $V_5$ is defined by the linear intersection $V_5=\Gr(2,\CC^5)\cap H_1\cap H_2 \cap H_3$ where $H_i$ are the general hyperplane in $\PP(\wedge^2\CC^5)=\PP^9$.
\begin{itemize}
\item $\mathrm{Pic}_{\ZZ}(V_{5})\cong \ZZ\langle H\rangle$, $\mathrm{deg}(V_{5})=5$, $K_{V_{5}}=-2H$.
\item The cohomology ring of $Y$ over $\ZZ$ is isomorphic to
\[
\rH^{*}(Y_{5},\ZZ)\cong\ZZ[h_1,h_2,h_3]/\langle h_1^2-5h_2,h_1^3-5h_3,h_1h_2-h_3,h_2^2\rangle
\]
where $\deg(h_i)=2i, 1\leq i\leq 3$. Moreover, $h_1=c_1(\cO_Y(1))$ and $h_i$ is the Poincar\'e dual of the linear space of dimension $3-i$ for $i=2,3$.
\item Let $\cS_Y$ and $\cQ_Y$ be the restriction of the universal bundles $\cS$ and $\cQ$ on $\Gr(2,5)$. The Chern classes are
\begin{enumerate}
\item $c(\cS_{V_5})=1-h_1+2h_2$,
\item $c(\cQ_Y)=1+h_1+3h_2+h_3$,
\item $c(Y)=1+2h_1+12h_2+4h_3$.
\end{enumerate}
Unless otherwise stated, we omit the subscription $V_5$ in the universal bundles.
\end{itemize}
Let us denote by $\bM_d:=\bM_{\beta}(V_5)$ for $\beta=d [\text{line}]\in \rH_2(V_5,\ZZ)$. For $1\leq d \leq 3$, one can easily see that $\bM_{d} (Y)$ is isomorphic to the Hilbert scheme $\bH_{d} (V_5)$ of curves with Hilbert polynomial $dm+1$ (cf. \cite[Proposition 3.1]{Chu19}). Thus one can borrow the description of Hilbert scheme of rational curves in $V_5$.

\begin{proposition}[\cite{Fae05, FN89, Ili94, San14}]\label{radel}
$\bM_{1}\cong \PP^2$, $\bM_{2}\cong \PP^4$, $\bM_{3}\cong \Gr(2,5)$.
\end{proposition}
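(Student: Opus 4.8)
The plan is to identify the moduli space $\bM_d(V_5)$ of one-dimensional stable sheaves with the Hilbert scheme $\bH_d(V_5)$ of curves with Hilbert polynomial $dm+1$ (already observed in the excerpt via \cite[Proposition 3.1]{Chu19}), and then to invoke the classical identifications of these Hilbert schemes of low-degree rational curves in $V_5$. For each $1 \le d \le 3$ the subscheme cut out by Hilbert polynomial $dm+1$ is a connected curve of arithmetic genus zero and degree $d$; such a curve is either a smooth rational curve of degree $d$ or a degenerate configuration (a nodal union of lines, a conic plus a line, etc.). The first step is therefore to recall, from the geometry of $V_5$ as a linear section of $\Gr(2,5)$, that lines on $V_5$ form a smooth surface, conics form a fourfold, and twisted cubics form a fivefold, and that in each case every flat limit of smooth rational curves of that degree stays inside the given Hilbert scheme.

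The second step is to set up the concrete descriptions. For $d=1$: lines on $V_5$ are parametrized by $\PP^2$ — this is the classical Furushima--Nakayama / Iliev result, and it follows from the fact that $V_5 = \Gr(2,5) \cap \PP^6$ with the lines swept out by a ruled structure; one realizes the Fano surface of lines as the Veronese $\PP^2 \hookrightarrow \Gr(2,6)$ or directly identifies it with $\PP^2$. For $d=2$: the Hilbert scheme of conics is a $\PP^4$ (Iliev, Sanna), which one sees by analyzing planes meeting $V_5$ and the fact that $V_5$ contains no plane, so every conic spans a $\PP^2$ cutting $V_5$ in a conic, and the family of such $\PP^2$'s is a $\PP^4$. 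For $d=3$: twisted cubics on $V_5$ are parametrized by $\Gr(2,5)$ (Faenzi, Sanna); one can exhibit this isomorphism either via the Hilbert--Chow-type morphism sending a twisted cubic to a point of the original $\Gr(2,5)$, or by using the known correspondence between cubics and two-dimensional subspaces. In all three cases I would cite the references \cite{Fae05, FN89, Ili94, San14} for the actual proofs and content myself with recalling the statements and sketching why the Hilbert scheme is smooth and irreducible of the stated dimension.

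The third step is to reconcile $\bM_d$ with $\bH_d$ precisely: a stable sheaf $F$ with $[F] = d[\mathrm{line}]$ and $\chi(F) = 1$ is pure of dimension one; its support is a curve of degree $d$ and, because $\chi(F)=1$ forces $F = \cO_C$ on its (possibly non-reduced) support when that support has the right Hilbert polynomial, one gets a bijection with $\bH_d$. Stability here is automatic or easily checked in low degree since the support is connected. This uses the cited \cite[Proposition 3.1]{Chu19}, and I would simply quote it, possibly adding a sentence on why in these degrees the structure sheaf of the curve is the only stable sheaf with the given invariants.

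The main obstacle is not in the logical structure — it is essentially a matter of assembling known classification results — but rather in verifying that the identifications are scheme-theoretic (not just set-theoretic), i.e. that $\bM_d$ is actually isomorphic, with its natural scheme structure, to $\PP^2$, $\PP^4$, $\Gr(2,5)$ respectively, and in particular smooth. For $d=3$ this is the delicate point: one must know that the Hilbert scheme of twisted cubics in $V_5$ has no embedded or extra components coming from, say, plane cubics or unions of a conic and a line, and that it is globally isomorphic to $\Gr(2,5)$ rather than merely birational or a blow-up thereof. Here I would lean entirely on Faenzi's and Sanna's work, cite it as the source of the smoothness and the explicit isomorphism, and note that for the subsequent DT-invariant computation only the isomorphism type together with the universal sheaf is needed, both of which are supplied by those references.
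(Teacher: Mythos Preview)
Your proposal is correct and aligns with the paper's treatment: the paper does not prove this proposition at all but simply cites \cite{Fae05, FN89, Ili94, San14} for the identifications, after noting (as you do) the isomorphism $\bM_d \cong \bH_d$ via \cite[Proposition 3.1]{Chu19}. Your sketch in fact supplies more explanation than the paper, which defers entirely to the literature; the only addition the paper makes is a remark that the $d=1,2$ cases were reproved birationally in \cite[Section 7]{CHL18}.
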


\begin{remark}
By taking an explicit choice of the hyperplanes $H_i$, the results for $d=1$ and $2$ of Proposition \ref{radel} has been reproved by the birational-geometric method (For the detail, see \cite[Section 7]{CHL18}).
\end{remark}

The universal sheaves over $\bM_d$ were explicitly presented in Proposition 2.20, Proposition 2.32,  and Proposition 2.46 of \cite{San14}. Let $i:\cC_d\hookrightarrow\bM_d\times V_5$ be the universal curve in $\bM_d\times V_5$. The free resolutions of $i_*\cO_{\cC_d}$ on $\bM_d\times V_5$ are
\begin{enumerate}
\item ($d=1$) $0\lr\cO_{\PP^2}(-3)\boxtimes\cS\lr\cO_{\PP^2}(-2)\boxtimes\cQ^*\lr \cO_{\PP^2\times V_5}\lr i_*\cO_{\cC_1}\lr0$,
\item ($d=2$) $0\lr\cO_{\PP^4}(-2)\boxtimes\cO_{V_5}(-1)\lr\cO_{\PP^4}(-1)\boxtimes\cS\lr \cO_{\PP^4\times V_5}\lr i_*\cO_{\cC_2}\lr0$,
\item ($d=3$) $0\lr\cS(-1)\boxtimes\cO_{V_5}(-1)\lr\cO_{\Gr(2,5)}(-1)\boxtimes\cQ(-1)\lr \cO_{\Gr(2,5)\times V_5}\lr i_*\cO_{\cC_3}\lr0$.
\end{enumerate}
Note that each of the moduli spaces is smooth, and we can check that the virtual class $[\bM_d]^{\text{vir}}$ is the Euler class of a K-theoretic obstruction bundle $\text{ob}_{\bM_d}$.
One can compute this K-theoretic obstruction bundle by the formula
\[
[\text{ob}_{\bM_d}] = [T_{\bM_d}] + [\mathbf{R}_{\pi_{\bM_{d}}}\mathbf{R}\cH om_{\bM_{d}\times V_5}(i_*\cO_{\cC_d},i_*\cO_{\cC_d})[1] \ ] - [\cO]
\] 
which appears in the proof of \cite[Proposition 2.13]{CT21}.
\begin{center}
\begin{tabular}{|l||M{7.7cm}|M{7.7cm}|}
\hline
$d$&$\qquad\mathbf{R}_{\pi_{\bM_{d}}}\mathbf{R}\cH om_{\bM_{d}\times V_5}(i_*\cO_{\cC_d},i_*\cO_{\cC_d})[1]$& $\qquad\qquad\qquad[\text{ob}_{\bM_d}]$\\
\hline
\hline
$1$&$-3[\cO_{\PP^2}]+3[\cO_{\PP^2}(1)]+5[\cO_{\PP^2}(2)]-5[\cO_{\PP^2}(3)]$&$[\cO_{\PP^2}]-5[\cO_{\PP^2}(2)]+5[\cO_{\PP^2}(3)]$\\
\hline
$2$&$-3[\cO_{\PP^4}]+10[\cO_{\PP^4}(1)]-7[\cO_{\PP^4}(2)]$&$[\cO_{\PP^4}]-5[\cO_{\PP^4}(1)]+7[\cO_{\PP^4}(2)]$\\
\hline
$3$&$-2[\cO_{\Gr(2,5)}]+10[\cO_{\Gr(2,5)}(1)]-7[\cS^*(1)]+5[\cS^*]-[\cS^*\otimes \cS]$&$[\cO_{\Gr(2,5)}]-10[\cO_{\Gr(2,5)}(1)]+7[\cS^*(1)]-5[\cS^*]+[\cS^*\otimes \cS]+[\cS^*\otimes \cQ]$.\\
\hline
\end{tabular}
\end{center}
Here, the bundles $\cS$ and $\cQ$ in the fourth row are the universal sub-bundle and quotient bundle of $\bM_3=\Gr(2,5)$.
By using the computer algebra system, \texttt{Macaulay2} (\cite{M2}), we have
\begin{proposition}\label{v5dt}
The invariants $\langle \tau_i( h_{2-i})\rangle_{d}$ are given by the numbers of the following table.
\begin{center}
\begin{tabular}{|l||M{2.2cm}|M{2.2cm}|}
\hline
$d$&$i=0$& $i=1$ \\
\hline
\hline
$1$&$5$&$\frac{25}{2}$\\
\hline
$2$&$35$&$\frac{35}{2}$\\
\hline
$3$&$490$&$-\frac{490}{2}$\\
\hline
\end{tabular}
\end{center}
\end{proposition}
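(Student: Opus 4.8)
The plan is a direct intersection-theoretic computation on each of the three smooth moduli spaces of Proposition~\ref{radel}, taking as the only geometric input the explicit locally free resolutions of the universal curves $i_*\cO_{\cC_d}$ displayed above. For each $d\in\{1,2,3\}$ I fix the identification $\bM_d\cong\PP^2,\PP^4,\Gr(2,5)$ and read off $[i_*\cO_{\cC_d}]$ in $K$-theory as the alternating sum of the terms of its three-term resolution; from this, $\mathrm{ch}(\cF)$ and $\mathrm{ch}(\cF_{\text{norm}})$ (with $\cF_{\text{norm}}=\cF\otimes\pi_M^*(\pi_{M*}\det\cF)^{-1}$) follow by the Chern-character formula, using the Chern classes of $\cS,\cQ$ and the presentation of $\rH^*(V_5,\ZZ)$ recorded in Section~\ref{casev5} on the $V_5$-factor and the hyperplane/Schubert classes on the base.

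Next I would produce the virtual class. Each $\bM_d$ is smooth, and $\Ext^3(F,F)\cong\Hom(F,F\otimes\omega_{V_5})^{\vee}=0$ by Gieseker stability, since $\omega_{V_5}=\cO_{V_5}(-2)$ strictly decreases the reduced Hilbert polynomial; hence the DT obstruction theory has amplitude $[-1,0]$ and $[\bM_d]\virt=e(\text{ob}_{\bM_d})\cap[\bM_d]$. The $K$-theory class of $\text{ob}_{\bM_d}$ is then extracted from the resolutions through the formula $[\text{ob}_{\bM_d}]=[T_{\bM_d}]+[\mathbf{R}_{\pi_{\bM_d}}\mathbf{R}\cHom_{\bM_d\times V_5}(i_*\cO_{\cC_d},i_*\cO_{\cC_d})[1]]-[\cO]$ recalled above (from the proof of \cite[Proposition~2.13]{CT21}): one expands $[\mathbf{R}\cHom(i_*\cO_{\cC_d},i_*\cO_{\cC_d})]=[i_*\cO_{\cC_d}]^{\vee}\cdot[i_*\cO_{\cC_d}]$ by dualizing and multiplying the terms of the resolution, pushes forward along $\pi_{\bM_d}$ using K\"unneth and the cohomology of line bundles on $\PP^n$ and of $\cS^*,\cS^*\otimes\cS,\cS^*\otimes\cQ,\dots$ over $\Gr(2,5)$, and combines. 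This reproduces the entries of the table above, with $\text{ob}_{\bM_d}$ of virtual rank $\dim\bM_d-1=1,3,5$. For $d=3$ the outcome is only a virtual class, so $e(\text{ob}_{\bM_3})$ is to be read as the degree-$5$ component of the total Chern class of that formal difference, which needs the Chern classes of $\cS^*\otimes\cS$ and $\cS^*\otimes\cQ$ on $\Gr(2,5)$.

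Finally I assemble the invariants. Put $\tau_0(h_2)=\pi_{M*}(\pi_{V_5}^*h_2\cup\mathrm{ch}_2(\cF))$ and, in the Grothendieck--Riemann--Roch form of the Remark above, $\tau_1(h_1)=\pi_{M*}\big(\pi_{V_5}^*h_1\cup\{\mathrm{ch}(\cF_{\text{norm}})\cdot\td(K_{V_5})^{-1}\}_{3}\big)$, both lying in $\rH^2(\bM_d,\QQ)$ after integration over the $V_5$-factor; then evaluate $\langle\tau_i(h_{2-i})\rangle_d=\int_{\bM_d}e(\text{ob}_{\bM_d})\cup\tau_i(h_{2-i})$ in the cohomology rings of $\PP^2$, $\PP^4$ and $\Gr(2,5)$. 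From the resolutions onward this is a finite manipulation of explicit graded rings, so the bookkeeping is best delegated to \texttt{Macaulay2}; the output should be $5,35,490$ for $i=0$ and $\tfrac{25}{2},\tfrac{35}{2},-\tfrac{490}{2}$ for $i=1$.

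The conceptual content is light: the only genuinely geometric point is the vanishing $\Ext^3(F,F)=0$, which legitimizes the two-term description of $[\bM_d]\virt$ and is immediate from Serre duality on the Fano threefold. The real difficulty is organizational --- keeping the normalization and grading conventions of $\tau_1$ straight (the $\det$-twist of $\cF$, the $\td(K_{V_5})^{-1}$ correction, and the passage between the descendent as seen on the local Calabi--Yau $4$-fold and on $V_5$) and, for $d=3$, carrying the Chern classes of $\cS^*\otimes\cS$ and $\cS^*\otimes\cQ$ through the Euler class of the virtual obstruction and the final pairing on $\Gr(2,5)$. I expect that last computation to be the main obstacle.
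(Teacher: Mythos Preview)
Your proposal is correct and follows essentially the same route as the paper: extract $[\text{ob}_{\bM_d}]$ from the formula $[T_{\bM_d}]+[\mathbf{R}\pi_*\mathbf{R}\cHom(i_*\cO_{\cC_d},i_*\cO_{\cC_d})[1]]-[\cO]$ by expanding the resolutions and pushing forward, cap with the Euler class to get $[\bM_d]\virt$, compute the insertions $\tau_0(h_2),\tau_1(h_1)$ from $\mathrm{ch}(\cF)$ via GRR, and pair in the cohomology rings of $\PP^2,\PP^4,\Gr(2,5)$ (the paper also delegates the bookkeeping to \texttt{Macaulay2} and only spells out $d=3$). Your justification of $[\bM_d]\virt=e(\text{ob}_{\bM_d})\cap[\bM_d]$ via the vanishing $\Ext^3(F,F)\cong\Hom(F,F(-2))^{\vee}=0$ is more explicit than what the paper writes, but this is the only point of difference.
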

\begin{proof}
Let us present the computation of the invariants for the degree $d=3$ case. The other cases are more simple and thus we omit it. The cohomology ring structure of $\Gr(2,5)$ is very well-known as follow. Let $m_i:=c_i(\cS)$ be the $i$-th Chern class of the universal subbundle $\cS$ of Grassmannian $\Gr(2,5)$. The cohomology ring of $\Gr(2,5)$ is given by (\cite[Theorem 5.26]{EH16})
\[
\rH^*(\Gr(2,5),\ZZ)=\ZZ[m_1,m_2]/\langle-m_1^5+4m_1^3m_2-3m_1m_2^2,\; m_1^4-3m_1^2m_2+m_2^2
\rangle.
\]
Note that the dual of the point class is $[\text{point}]^*=m_2^3$.
Then the Chern class of the obstruction bundle $\text{ob}_{\bM_d}$ is
\[
c(\text{ob}_{\bM_d})=1-11m_1+(48m_1^2+7m_2)+(-102m_1^3-56m_1m_2+451m_1^2m_2)-78m_2^2-490m_1m_2^2.
\]
Thus the virtual class of $\bM_3$ is $[\bM_3]^{\text{vir}}=-490m_1m_2^2$.

On the other hand, the insertion classes on $\rH^*(\bM_3\times V_5)$ are
\[\begin{split}
\tau_0(h_2)&=m_1^2h_2-m_2h_2-m_1h_3, \\
\tau_1(h_1)&= m_1^3h_1-\frac{3}{2}m_1m_2h_1-\frac{5}{2}m_1^2h_2+\frac{1}{2}m_1h_3
\end{split}
\]
From these one, we have
\[
\int_{[\bM_3]^{\text{vir}}} \tau_0(h_2)=490m_2^3h_3, \; \int_{[\bM_3]^{\text{vir}}} \tau_1(h_1)=-245m_2^3h_3.
\]
\end{proof}

\begin{remark}
From the description of the universal curve $\cC_1$ in \cite[Lemma 2.1, 2.2]{FN89}, one can easily check that the obstruction bundle is isomorphic to $\mathrm{ob}_{\bM_1}\cong \cO_{\bM_1}(5)$ and thus its the cohomology matches with our computation.
\end{remark}
\begin{remark}
The universal curve $\cC_2$ is a regular section of the vector bundle $\cO_{\PP^4}(1)\boxtimes \cS^*$ (\cite[Proposition 2.32]{San14}). Hence the Chern charcter is given by
\[
\text{ch}(i_*\cO_{\cC_2})=c_2(\cO_{\PP^4}(1)\boxtimes \cS^*)\cdot\text{td}(\cO_{\PP^4}(1)\boxtimes \cS^*)^{-1},
\]
and thus its cohomology class matches with our computation. In the following subsection, we find the fundamental class of 
$\cC_3$ by using Porteous’ formula.
\end{remark}
\subsubsection{The universal cubic curves $\cC_3$ via degeneracy loci}
Recall that the space $\bM_3$ is isomorphic to $\Gr(2,V)$ such that $\dim V=5$. In this subsection, we describe the universal family $\cC_3$ of cubic curves in a geometric way which confirms the calculation of previous subsection. Let us recall the isomorphism $\bM_3\cong \Gr(2,V)$. Consider the Schubert variety $$\sigma_{2,0}(l):=\{[l']\in \Gr(2,V)| l\cap l'\neq \emptyset\}$$ which is a degree $3$ and $4$-dimensional subvariety of $\Gr(2,V)$. By taking the hyperplane sections $H_1\cap H_2\cap H_3$ with this $\sigma_{2,0}(l)$, we obtain a twisted cubic curve
\[
C_l:=\sigma_{2,0}(l)\cap H_1\cap H_2\cap H_3\subset \Gr(2,5)\cap H_1\cap H_2\cap H_3=V_5,
\]
that is, $\pi_{V_5}(\pi_{\bM_3}^{-1}([l]))=C_l$. Conversely, for a point $p=[L]\in Y\subset \Gr(2,V)$, the inverse image $\pi_{V_5}^{-1}(p)$ consists of the twisted cubic curves $[C_l]$ such that $l\cap L\neq \emptyset$. This implies that $\pi_{\bM_3}(\pi_{V_5}^{-1}(p))=\sigma_{2,0}(L)\subset \Gr(2,V)=\bM_3$. Note that the Schubert variety $\sigma_{2,0}$ is a cone of rational normal scroll $\PP(\cO_{\PP^1}(1)^{\oplus3})\cong \PP^2\times\PP^1 $ in $\PP^6$.
Thus the universal cubic $\cC_3$ is a irreducible variety of dimension $7$. Also, it is well-known that the Schubert variety $\sigma_{2,0}$ can be defined by a degeneracy loci of vector bundles over Grassmannian. 
\begin{example}
Let $i:W\subset V$ be one-dimensional subvector space of $V$. For a line $l=\PP(W)\subset\PP(V)$,
let $$\phi:\cU\lr (V/W) \otimes \cO_{\Gr(2,V)}$$ be the canonical morphism induced by the injection $i$. Then one can check that the degeneracy locus $D_{\leq1}(\phi)\subset \Gr(2,V)$ of the map $\phi$ whose rank is $\leq 1$ has the support $\sigma_{2,0}(l)$. Also it has the expected dimension $\dim \Gr(2,V)-(2-1)\cdot(3-1)=4$. Thus, by Porteous' formula, the fundamental class of $D_{\leq1}(\phi)$ is given by
\[[D_{\leq1}(\phi)]= c_2([(V/W) \otimes \cO_{\Gr(2,V)}]-\cU).\]
\end{example}
In our case, by relativizing over $V_5$, we can find the fundamental form $[\cC_3]$ over $\bM_3\times V_5$.
\begin{proposition}
Let $\cS_{\bM_3}$ be the universal subbundle of $\bM_3$ and $\cQ_{V_5}$ be the restriction of the universal quotient bundle $V_5\subset \Gr(2,V)$. Then the fundamental class of the universal cubic curves is given by
\[
[\cC_3]=c_2(\cQ_{V_5}-\cS_{\bM_3})\in \rH^2(\bM_3\times Y).
\]
\end{proposition}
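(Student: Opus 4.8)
The plan is to exhibit $\cC_3$ as a relative first degeneracy locus on $\bM_3\times V_5$ and then read off its fundamental class from Porteous' formula, carrying out the construction of the Example above in families over \emph{both} factors simultaneously. Recall that $\bM_3\cong\Gr(2,V)$ with $\dim V=5$ parametrizes the lines $l\subset\PP(V)$, that $V_5\subset\Gr(2,V)$ parametrizes the lines $L\subset\PP(V)$ inside the same ambient space $V$ (this is how $C_l=\sigma_{2,0}(l)\cap H_1\cap H_2\cap H_3$ was defined), and that the incidence $\dim(l\cap L)\geq1$, equivalently $l\cap L\neq\emptyset$ in $\PP(V)$, is exactly the condition cutting out $\sigma_{2,0}(l)$ restricted to $V_5$. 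So first I would introduce on $\bM_3\times V_5$ the morphism
\[
\phi\colon \cS_{\bM_3}\hra V\otimes\cO_{\bM_3\times V_5}\surra \cQ_{V_5}
\]
obtained by composing the tautological sub-bundle inclusion pulled back from the first factor with the tautological quotient pulled back from the second. Over a point $([l],[L])$ this morphism is the composite $l\hookrightarrow V\twoheadrightarrow V/L$, whose kernel is $l\cap L$; hence $\rk\phi\leq1$ there if and only if $\dim(l\cap L)\geq1$, i.e. if and only if $[L]\in C_l$. Therefore the first degeneracy locus $D_{\leq1}(\phi)$ agrees with $\cC_3$ as a set (indeed as the universal family, fiberwise over $\bM_3$).

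Second I would check that $D_{\leq1}(\phi)$ has the expected codimension, which is what makes Porteous' formula applicable without excess-intersection corrections. For a morphism from a rank-$2$ bundle to a rank-$3$ bundle the locus where the rank is $\leq1$ has expected codimension $(2-1)(3-1)=2$; on the other hand $\cC_3$ is irreducible of dimension $7$ (as recorded above) inside the $9$-dimensional $\bM_3\times V_5$, so its codimension is exactly $2$. Thus $D_{\leq1}(\phi)$ is of pure expected codimension; in particular its structure sheaf is Cohen--Macaulay, and it is generically reduced because at a point of corank exactly $1$ the $2\times2$ minors of a $3\times2$ matrix cut out, in a suitable local frame, a smooth codimension-$2$ complete intersection. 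Since $\cC_3$ is reduced and irreducible of the same dimension, it follows that $[D_{\leq1}(\phi)]=[\cC_3]$ as cycles.

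Third, Porteous' formula applied to $\phi\colon\cS_{\bM_3}\to\cQ_{V_5}$ with source rank $2$, target rank $3$ and threshold $1$ expresses $[D_{\leq1}(\phi)]$ as the $1\times1$ determinant $c_{3-1}(\cQ_{V_5}-\cS_{\bM_3})=c_2(\cQ_{V_5}-\cS_{\bM_3})$, where $\cQ_{V_5}-\cS_{\bM_3}$ is the virtual bundle and its total Chern class is $c(\cQ_{V_5})/c(\cS_{\bM_3})$; combined with the previous step this gives $[\cC_3]=c_2(\cQ_{V_5}-\cS_{\bM_3})$. The one genuinely delicate point is the transversality input of the second step --- that the determinantal locus does not jump in dimension and is generically reduced, so that no multiplicity enters --- and I would dispose of it exactly as indicated, using the \emph{a priori} fact that $\cC_3$ is an integral $7$-fold. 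Finally, as a consistency check (this is the ``confirmation'' the proposition provides) one expands $c_2(\cQ_{V_5}-\cS_{\bM_3})$ in the presentation of $\rH^*(\bM_3\times V_5)$ and verifies that it matches the degree-$4$ part of $\mathrm{ch}(i_*\cO_{\cC_3})$ read off from the free resolution $(3)$ of $i_*\cO_{\cC_3}$ in the previous subsection.
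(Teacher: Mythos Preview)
Your proposal is correct and follows essentially the same approach as the paper: both construct the morphism $\phi\colon \cS_{\bM_3}\to \cQ_{V_5}$ on $\bM_3\times V_5$ (the paper phrases this via the relative Grassmannian $\Gr(2,V\otimes\cO_{V_5})$, which is the same space), identify $\cC_3$ set-theoretically with $D_{\leq 1}(\phi)$, invoke the fact that $\cC_3$ is generically reduced and Cohen--Macaulay of the expected codimension, and apply Porteous' formula. Your treatment of the transversality step is in fact more explicit than the paper's.
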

\begin{proof}
Since $V_5\subset \Gr(2,V)$, we have an universal sequence
\begin{equation}\label{eq1}
\ses{\cS_{V_5}}{V\otimes \cO_{V_5}}{\cQ_{V_5}}.
\end{equation}
Let us consider the relative Grassmannian bundle $\Gr(2, V\otimes \cO_{V_5})\lr V_5$ with the structure morphism $\pi_{V_5}:\Gr(2, V\otimes \cO_{V_5})\lr V_5$. Here we denote the same notation with the projection map because $\Gr(2, V\otimes \cO_{V_5})\cong \Gr(2,V)\times V_5$. From the universal sequence,
\[
\ses{\cS_{\bG}}{\pi_Y^*( V\otimes \cO_{V_5})}{\cQ_{\bG}}
\]
over $\bG:=\Gr(2, V\otimes \cO_{V_5})$ and the pull-back of the sequence \eqref{eq1}, we obtain a bundle morphism
\[
\phi_{\bG}:\cS_{\bG}\lr \pi_{V_5}^*\cQ_{V_5}
\]
over $\bG$. Note that $\cS_{\bG}=\pi_{\bM_3}^*\cS_{\bM_3}$ by its definition. The space $\cC_3$ is reduced because it is a generically reduced and Cohen–Macaulay space. Thus the degeneracy locus $D_{\leq1}(\phi_{\bG})$ of the map $\phi_{\bG}$ is $\cC_3$. By Porteous’ formula,
\[
[D_{\leq1}(\phi_{\bG})]=c_2(\cQ_{V_5}-\cS_{\bM_3}).
\]
\end{proof}
\begin{remark}
The Poincar\'e dual of the fundamental class of the universal cubic curves is
\[
[\cC_3]=m_1^2-m_2-m_1h_1+3h_2 \in \rH^2(\bM_3\times V_5),
\]
which matches our computation of Subsection \ref{casev5}.
\end{remark}

\subsubsection{The case $V_{22}$}\label{casev22}
Let us recall the definition of the variety $V_{22}$. Let $\cS$ and $\cQ$ be the universal bundles of $\Gr(3,7)$. Then $V_{22}$ is defined as a zero section of $\wedge^2(\cS^*)^{\oplus3}$. Alternatively, $V_{22}$ can be regraded as a subvariety of the net of quadrics $\bN(4;2,3)$.
\begin{itemize}
\item $\mathrm{Pic}_{\ZZ}(V_{22})\cong \ZZ\langle H\rangle$, $\mathrm{deg}(V_{22})=22$, $K_{V_{22}}=-H$.
\item The cohomology ring of $V_{22}$ over $\ZZ$ is isomorphic to
\[
\rH^{*}(V_{22},\ZZ)\cong\ZZ[h_1,h_2,h_3]/\langle h_1^2-22h_2,h_1^3-22h_3,h_1h_2-h_3,h_2^2\rangle
\]
where $\deg(h_i)=i, 1\leq i\leq 3$. Moreover, $h_1=c_1(\cO_{V_{22}}(1))$ and $h_i$ is the Poincar\'e dual of the linear space of dimension $3-i$ for $i=2,3$.
\item The Chern classes of tautological bundles on $V_{22}$ are
\begin{enumerate}
\item $c(\cS_{V_{22}})=1-h_1+10h_2-2h_3,$
\item $c(\cQ_{V_{22}})=1+h_1+12h_2+4h_3,$
\item $c({V_{22}})=1+h_1+24h_2+4h_3$
\end{enumerate}
where $\cS_{V_{22}}$ and $\cQ_{V_{22}}$ are the restriction of the universal bundles $\cS$ and $\cQ$ on $\Gr(3,7)$. Unless otherwise stated, we omit the subscription $V_{22}$ of the universal bundles.
\end{itemize}
By the same reason as in the case of $V_5$, the moduli space $\bM_{d} (V_{22})$ is isomorphic to the Hilbert scheme $\bH_{d} (V_{22})$ of curves with Hilbert polynomial $dm+1$ for $1\leq d \leq 3$. The later space $\bH_{d} (V_{22})$ has been studied by many authors.
\begin{proposition}[\cite{KPS18, Fae14, KS04}]
$\bM_{1}\cong Q$, $\bM_{2}\cong \PP^2$, $\bM_{3}\cong \PP^3$, where $Q$ is a singular planar quartic curve.
\end{proposition}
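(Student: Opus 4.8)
The plan is to imitate the strategy used above for $V_5$: fix a concrete model of $V_{22}$, identify each Hilbert scheme $\bH_d(V_{22})$ with a geometrically meaningful locus in a well-understood parameter space, and then pin down its scheme structure (including the nature of the singularity of $Q$). Recall that $V_{22}=Z(s_1,s_2,s_3)\subset\Gr(3,V_7)$ is cut out by three global sections of $\wedge^2\cS\dual$; equivalently it is attached to the net $N=\langle s_1,s_2,s_3\rangle$ of skew forms on $V_7$, and the Plücker embedding $V_{22}\subset\Gr(3,V_7)\subset\PP(\wedge^3V_7)$ restricts to the anticanonical embedding $V_{22}\subset\PP^{13}$. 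Hence a degree-$d$ rational curve on $V_{22}$ is exactly a degree-$d$ rational curve on $\Gr(3,V_7)$ on which all three $s_i$ vanish identically. I would therefore first parametrize degree-$d$ rational curves on $\Gr(3,V_7)$, and then cut out the sublocus where $s_i|_C\equiv 0$; since $\wedge^2\cS\dual$ restricted to such a $C$ is a sum of line bundles of controlled degree, this is a section-of-a-bundle condition, and globalizing it over the parameter space realizes $\bH_d(V_{22})$ as the zero locus of a section of an explicit vector bundle, of the expected dimension $d$.

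For $d=1$: lines on $\Gr(3,V_7)$ are the pencils $\{W:V_2\subset W\subset V_4\}$, so they form the flag variety $\mathrm{Fl}(2,4;V_7)$ of dimension $16$. A direct check shows such a line lies on $Z(\omega)$ precisely when $V_2$ is $\omega$-isotropic and $V_4\subset V_2^{\perp_\omega}$, which for the three forms of $N$ amounts to the vanishing of a section of a rank-$15$ bundle, so $\bH_1(V_{22})$ has expected dimension $1$. I would then (i) compute the class of this locus to see that it maps isomorphically onto a plane quartic in the relevant $\PP^2$ (the plane of conics, or $\PP(N)$, whichever the model makes available), and (ii) locate its singular point by identifying where the differential of the defining section drops rank, equivalently where $\rH^1(C,N_{C/V_{22}})\neq 0$; the extra symmetry of this particular $V_{22}$, which is a compactification of $\CC^3$, should make both computations explicit. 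This is the step I expect to be the main obstacle: proving that $Q$ is genuinely \emph{planar} and identifying the precise type of its singularity, as opposed to merely checking $\dim\bH_1=1$, is exactly the delicate content of \cite{KPS18}.

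For $d=2$ and $d=3$ I would invoke, and reprove in this concrete model, the known description of conics on $V_{22}$ (following Mukai) and the corresponding statement for twisted cubics. A conic $C\subset V_{22}$ spans a plane $\Lambda\cong\PP^2\subset\PP^{13}$, and via $N$ one attaches to $C$ a distinguished point of a fixed $\PP^2$; I would show the resulting morphism $\bH_2(V_{22})\to\PP^2$ is an isomorphism by constructing its inverse, i.e. reconstructing the conic from its image, and checking bijectivity on tangent spaces, namely $\rH^1(C,N_{C/V_{22}})=0$ for every conic, so that $\bH_2(V_{22})$ is smooth of dimension $2$ and the map is finite and birational onto the normal variety $\PP^2$, hence an isomorphism. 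For $d=3$: a twisted cubic $C\subset V_{22}$ is arithmetically Cohen--Macaulay, and intersecting $V_{22}$ with a general hyperplane through $C$ produces a K3 surface in which $C$ is linked to a lower-degree curve; running this linkage, and simultaneously writing down the free resolution of $i_*\cO_{\cC_3}$ over $\bH_3(V_{22})\times V_{22}$ in the style of the resolutions quoted from \cite{San14}, should identify $\bH_3(V_{22})$ with $\PP^3$ and at the same time furnish the obstruction-bundle data needed for the DT computation. Throughout, I would record that $V_{22}$ is a prime Fano threefold, so that the only candidate components are the ones described, and that the vanishing of the relevant $\rH^1$ of normal bundles gives smoothness away from the singular point of $Q$.
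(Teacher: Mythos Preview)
The paper does not give a proof of this proposition at all: it is stated with attribution to \cite{KPS18, Fae14, KS04} and then used as input. There is nothing in the paper to compare your argument against; the authors treat the identifications $\bM_1\cong Q$, $\bM_2\cong\PP^2$, $\bM_3\cong\PP^3$ as black boxes from the literature, and only the \emph{universal families} over $\bM_2$ and $\bM_3$ (quoted from \cite{Fae14} and \cite{KS04}) and the degeneracy-locus description of $\cC_1$ (from \cite{AF06}) are spelled out, because those are what the DT computation actually consumes.

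Your outline is a plausible sketch of how one might reprove the cited results, and it is broadly in the spirit of the references (Mukai's net-of-quadrics model for conics, linkage and ACM resolutions for twisted cubics, the flag-variety description of lines on $\Gr(3,7)$). But since the paper offers no argument of its own here, the appropriate ``proof'' in this context is simply to cite \cite{KPS18, Fae14, KS04}; any attempt to supply details goes beyond what the paper does. If your goal is to match the paper, replace the proposal by a one-line reference to those sources.
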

Let us compute the degree $d=1$ case by the result of Pirola (\cite{Pir85}).
By the Chern class computation, the virtual dimension of $\bM_1$ is $\text{virt. dim} \bM_1=1$ and the virtual fundamental class is given by the following. 
\begin{lemma}
\[[\bM_1]\virt = [\bM_1]\]
\end{lemma}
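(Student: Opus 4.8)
The plan is to leverage the coincidence $\mathrm{vdim}\,\bM_1 = 1 = \dim Q = \dim\bM_1$: both $[\bM_1]\virt$ and $[\bM_1]$ then lie in $A_1(\bM_1)$, so it is enough to check that they restrict to the same class on a dense open subset. Since $\bM_1\cong Q$ is a (generically) reduced plane quartic by Pirola's analysis \cite{Pir85}, the generic point of each irreducible component of $\bM_1$ is a smooth point; thus it suffices to prove that the Thomas perfect obstruction theory of \cite[Corollary 3.39]{Tho00} is unobstructed, i.e.\ has vanishing obstruction sheaf, on a dense open $U\subset\bM_1$.

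The key point is a computation of $\Ext^{\bullet}_{V_{22}}(F,F)$ for $F=i_*\cO_\ell$ the sheaf corresponding to a general line $\ell\subset V_{22}$, which we may take smooth and $\cong\PP^1$. Stability of $F$ gives $\Hom_{V_{22}}(F,F)=\CC$. Since $i^{*}K_{V_{22}}=\cO_\ell(K_{V_{22}}\cdot\ell)=\cO_\ell(-1)$ has negative degree, Serre duality gives $\Ext^3_{V_{22}}(F,F)=\Hom_{V_{22}}(F,F\otimes K_{V_{22}})\dual=\Hom_\ell(\cO_\ell,\cO_\ell(-1))\dual=0$. Moreover $\chi(F,F)=\int_{V_{22}}\mathrm{ch}(F)\dual\,\mathrm{ch}(F)\,\mathrm{td}(V_{22})=0$, because $\mathrm{ch}(F)$ lies in Chow degrees $\geq 2$ on the threefold, so the product $\mathrm{ch}(F)\dual\,\mathrm{ch}(F)$ vanishes for dimension reasons. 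Hence $\chi(F,F)=1-\dim\Ext^1_{V_{22}}(F,F)+\dim\Ext^2_{V_{22}}(F,F)=0$, and at a smooth point of $\bM_1$ of dimension $1$ the Zariski tangent space $\Ext^1_{V_{22}}(F,F)$ is one-dimensional, forcing $\Ext^2_{V_{22}}(F,F)=0$. Because $H^{>0}(\cO_{V_{22}})=0$ (Kodaira vanishing, consistent with $V_{22}$ having the cohomology of $\PP^3$), the trace maps vanish, so the traceless $\Ext$-groups coincide with the full ones and the obstruction sheaf of the Thomas theory is fiberwise $\Ext^2_{V_{22}}(F,F)$. It therefore vanishes on the dense open $U:=\{[F]\in\bM_1:\Ext^2_{V_{22}}(F,F)=0\}$.

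To conclude, on $U$ the moduli space $\bM_1$ is smooth of pure dimension $1$ with vanishing obstruction sheaf, hence $[\bM_1]\virt|_{U}=[U]$, while also $[\bM_1]|_{U}=[U]$. Therefore $[\bM_1]\virt-[\bM_1]$ is a $1$-cycle supported on the closed complement $\bM_1\setminus U$, which is $0$-dimensional since $\bM_1$ is a curve; as $A_1(\bM_1\setminus U)=0$, the excision sequence $A_1(\bM_1\setminus U)\to A_1(\bM_1)\to A_1(U)\to 0$ forces $[\bM_1]\virt=[\bM_1]$.

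The main obstacle is the geometric input that $\bM_1\cong Q$ is generically reduced — equivalently, that the general line of $V_{22}$ is unobstructed, i.e.\ has normal bundle $\cO_{\PP^1}\oplus\cO_{\PP^1}(-1)$ rather than $\cO_{\PP^1}(1)\oplus\cO_{\PP^1}(-2)$. This is exactly what Pirola's study \cite{Pir85} supplies; without it one would have to compute $\Ext^2_{V_{22}}(i_*\cO_\ell,i_*\cO_\ell)$ directly through the Koszul-type spectral sequence associated to $Li^{*}i_{*}\cO_\ell$. A secondary, purely bookkeeping point is to confirm that the obstruction sheaf governing the virtual class of \cite[Corollary 3.39]{Tho00} is genuinely $h^{1}$ of the dual obstruction complex and agrees pointwise with $\Ext^2_{V_{22}}(F,F)$ — which is precisely the consequence of $H^{>0}(\cO_{V_{22}})=0$ used above.
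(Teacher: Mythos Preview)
Your argument is correct and takes a genuinely different route from the paper's. The paper exploits that $Q\subset\PP^2$ is a Cartier divisor, hence lci: the intrinsic normal cone $\cC_{\bM_1}=[N_{Q/\PP^2}/T_{\PP^2}|_Q]$ is then already a vector \emph{bundle} stack, and since $h^1/h^0(E^\vee)$ is a bundle stack of the same (zero) stacky dimension containing it as a closed substack, the two coincide --- yielding $[\bM_1]^{\mathrm{vir}}=[\bM_1]$ in one stroke, without ever computing an $\Ext$ group. Your approach is more hands-on: verify $\Ext^2_{V_{22}}(F,F)=0$ at smooth points via $\chi(F,F)=0$ and Serre duality, then invoke excision on $A_1$. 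This trades the cone-stack formalism for the extra geometric input that $Q$ is generically reduced; the paper's lci argument is indifferent to whether $Q$ has non-reduced components.

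Two small repairs. First, the citation: \cite{Pir85} treats Chern characters of degeneracy loci and does not establish that $Q$ is generically reduced or that the general line has normal bundle $\cO\oplus\cO(-1)$; the relevant reference here is \cite{KPS18}. Second, and more substantively, for the Mukai--Umemura threefold \emph{every} line has normal bundle $\cO(1)\oplus\cO(-2)$ and $Q$ is a double conic, so your dense smooth open $U$ is empty and the excision step collapses. The fix is to begin, as the paper does, by invoking deformation invariance of the DT invariants to assume $V_{22}$ is not Mukai--Umemura.
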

\begin{proof}
By the deformation invariance of DT invariants, we may assume that $V_{22}$ is not Mukai-Umemura 3-folds. Then $M_1 \cong Q \subset \PP^2$ is a regular embedding, hence the intrinsic normal cone $\cC_{\bM_1}$ of $\bM_1$ is given by the bundle stack
\[
\cC_{\bM_1} = [N_{Q/\PP^2} / T_{\PP^2}|_{Q}].
\]
Let $E \to L_{\bM_1}$ be the usual perfect obstruction theory. Then by \cite{BF97}, we have the closed embedding
\[
\cC_{\bM_1} \hra h^1/h^0(E\dual).
\]
Since they are both bundle stacks with dimension 0 (as Artin stacks), we have $\cC_{\bM_1} = h^1/h^0(E\dual)$. Hence, by the definition of the virtual cycle in \cite{BF97}, we have
\[
[\bM_1]\virt = 0^!_{{h^1/h^0(E\dual)}} [\cC_{\bM_1}] = 0^!_{{h^1/h^0(E\dual)}}[h^1/h^0(E\dual)] = [\bM_1]. 
\]
where the last equality comes from properties of Gysin pull-back via bundle stacks \cite{Kre99}.
\end{proof}

\begin{proposition}
The degree $d=1$ DT invariant and descendent invariant on $V_{22}$ are given by
\[
\langle \tau_0(h_2)\rangle_{1}=2\;\text{and}\; \langle \tau_1(h_1)\rangle_{1}=22
\]
\end{proposition}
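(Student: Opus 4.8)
The plan is to run the same scheme as in the $V_5$ computations: produce an explicit model of the universal line $i\colon\cC_1\hookrightarrow\bM_1\times V_{22}$ over $\bM_1\cong Q$, read off the Chern character of the universal sheaf $\cF=i_*\cO_{\cC_1}$, convert the insertions $\tau_0(h_2)$ and $\tau_1(h_1)$ into honest cohomology (or Chow) classes on $\bM_1\times V_{22}$, and then integrate them against the virtual class, which by the preceding Lemma equals $[\bM_1]$.

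First I would use Pirola's description of the lines on a general $V_{22}$ \cite{Pir85} (invoking deformation invariance to exclude the Mukai--Umemura threefold, exactly as in the Lemma above): this identifies $\bM_1$ with the plane quartic $Q$ and, more importantly, realises the universal line $\cC_1$ as a degeneracy locus --- or equivalently furnishes a two--term locally free resolution of $i_*\cO_{\cC_1}$ on $\bM_1\times V_{22}$ --- in the same spirit as the resolutions of $i_*\cO_{\cC_d}$ used for $V_5$ and as the Porteous--type presentation of the universal cubic $\cC_3$ above. From such a presentation, Grothendieck--Riemann--Roch (or the corresponding Chern/Porteous formula together with the Todd class of the ambient bundle) gives $\mathrm{ch}(i_*\cO_{\cC_1})$; in particular $\mathrm{ch}_2(\cF)=[\cC_1]$, while $\mathrm{ch}_3(\cF)$ and $\mathrm{ch}_4(\cF)$ are determined by the same resolution. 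Since $\cF$ is supported in codimension $2$ one has $\mathrm{ch}_{\le1}(\cF)=0$, so $\mathrm{ch}_2(\cF)$ is independent of the choice of universal sheaf and $\tau_0(h_2)$ is unambiguous; for $\tau_1(h_1)$ one works with the normalized sheaf $\cF_{\mathrm{norm}}=\cF\otimes\pi_M^*(\pi_{M*}\det\cF)^{-1}$, correcting $\mathrm{ch}(\cF)$ by the line bundle $\pi_{M*}\det\cF$ on $\bM_1$ so that $\pi_{M*}\det\cF_{\mathrm{norm}}=\cO_{\bM_1}$.

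With these classes in hand I would compute $\tau_0(h_2)=\pi_{M*}(\pi_Y^*h_2\cup\mathrm{ch}_2(\cF))\in\rH^2(\bM_1,\ZZ)$ and $\tau_1(h_1)=\pi_{M*}(\pi_Y^*h_1\cup\mathrm{ch}_4(\cF_{\mathrm{norm}}))$ using the ring $\rH^*(V_{22},\ZZ)=\ZZ[h_1,h_2,h_3]/\langle h_1^2-22h_2,\ h_1^3-22h_3,\ h_1h_2-h_3,\ h_2^2\rangle$ together with $\rH^*(\bM_1)$ (equivalently, working on $\PP^2$ with the quartic $\bM_1$ having class $4$ times the line class). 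After the pushforward along $\pi_Y$ only the K\"unneth component of $[\cC_1]$ lying in $\rH^2(\bM_1)\otimes\rH^2(V_{22})$ contributes to $\langle\tau_0(h_2)\rangle_1$, and an analogous truncation handles $\langle\tau_1(h_1)\rangle_1$; integrating the resulting degree--two classes over $[\bM_1]\virt=[\bM_1]$ yields the numbers $2$ and $22$. As in the $V_5$ computations, this last bookkeeping is most safely carried out in \texttt{Macaulay2}.

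The step I expect to be the main obstacle is the first one: extracting from Pirola's geometric construction a resolution (or degeneracy--locus description) of $i_*\cO_{\cC_1}$ that is valid over all of $Q$, including its singular point, so that $\mathrm{ch}(\cF)$ is computed correctly; a secondary point requiring care is the normalization twist entering $\cF_{\mathrm{norm}}$, since the descendent insertion $\tau_1$ --- unlike the primary insertion $\tau_0$ --- is sensitive to it.
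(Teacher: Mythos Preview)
Your proposal is essentially the paper's proof: the paper realizes $\cC_1$ as the degeneracy locus of the map $\Phi\colon\cS^*\to\cK^*\boxtimes\cO_{\PP(B^*)}(1)$ on $\PP(B^*)\times V_{22}=\PP^2\times V_{22}$ (the rank--$5$ bundle $\cK$ coming from \cite[Lemma~3.1]{AF06}, and the Chern character of the degeneracy locus read off from \cite[Proposition~3.14]{Pir85}), and then evaluates the insertions by Grothendieck--Riemann--Roch exactly as you describe. Your worry about the singular point of $Q$ is dissolved by this very setup: one works on the ambient $\PP^2\times V_{22}$ throughout, where the degeneracy locus already lives, so no separate argument over the cusp is required.
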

\begin{proof}
Let $\cC_1$ be the universal curve over $\bM_1(=Q)$.
We compute the invariants by using the degeneracy loci method.
In \cite[Lemma 3.1]{AF06}, the authors describe how to obtain lines in $V_{22}$. We relativize their construction. Let $\cK$ be the vector bundle on $V_{22}$ with data $\rk(\cK)=5$, $c_1(\cK)=-2$, $c_2(\cK)=40$, $c_3(\cK)=-20$, and $\dim \Hom (\cK, \cS)=3$. Let $B^*=\Hom (\cK, \cS)$. Note that $Q\subset \PP(B^*)$. The universal curve $\cC_1\subset \PP(B^*)\times V_{22}$ is the degeneracy loci of the canonical homomorphism
\[
\Phi:\cS^*\lr \cK^*\boxtimes \cO_{\PP(B^*)}(1).
\]
In fact, the map $\Phi$ is the dual of the composition of the pull-back of the evaluation map $\text{ev}: \cK\otimes \Hom(\cK,\cS)\lr \cS$ on $V_{22}$ and the tautological map $\cO_{\PP(B^*)}(-1)\lr B\otimes \cO_{\PP(B^*)}$ on $\PP(B^*)$.

Let $\rH^\bullet(\PP(B^*),\ZZ)=\ZZ[m_1]/\langle m_1^3\rangle$ with $\deg(m_1)=1$.
By Proposition 3.14 in \cite{Pir85}, the Chern character of the structure sheaf $\cO_{\cC_1}$ over $\PP(B^*)\times V_{22}$ is given by
\[
\text{ch}(\cO_{\cC_1})=(2m_1^2h_1+4m_1h_2)+(-8m_1^2h_2+2m_1h_3)+(\text{the terms of higher degree}).
\]
Thus, by the Grothendieck-Riemann-Roch theorem, the invariants are
\[
\int_{[\bM_{1}]} \tau_0(h_2) =2m_1^2h_3 = 2[pt],\;\int_{[\bM_{1}]} \tau_1(h_1) =22m_1^2h_3 = 22[pt].
\]
\end{proof}
\begin{remark}
By Porteous’ formula, the dual class of the fundamental class $[\cC_1]$ is
\[
[\cC_1]=2m_1^2h_1+4m_1h_2.
\]
The intersection number of $[\cC_1]$ with the line class $h_2$ in $V_{22}$ is $[\cC_1]\cdot h_2=2m_1^2h_3$. This matches with the fact that the degree of the surface $S$ sweeping out by lines in $V_{22}$ is $\deg(S)=2$ (\cite[Section 3]{Ame98}).
\end{remark}
For the degree $d=2$ and $3$ cases, the universal curves over $\bM_d$ have been studied in \cite[Lemma 4.1]{Fae14} and \cite[Theorem 2.4]{KS04}. Let $i:\cC_d\hookrightarrow\bM_d\times V_{22}$ be the universal curve in $\bM_d\times V_{22}$. The free resolutions of $i_*\cO_{\cC_d}$ on $\bM_d\times V_{22}$ are
\begin{enumerate}
\item ($d=2$) $0\lr \cS\boxtimes\cO_{\PP^2}(-4)\lr \cQ^*\boxtimes\cO_{\PP^2}(-3)\lr \cO_{V_{22}\times \PP^2} \lr i_*\cO_{\cC_2}\lr 0$,
\item ($d=3$) $0\lr \cE\boxtimes \cO_{\PP^3}(-3)\lr \cS\boxtimes \cO_{\PP^3}(-2) \lr \cO_{V_{22}\times \PP^3}\lr i_*\cO_{\cC_3}\lr 0$,
\end{enumerate}
where $\text{rk}(\cE)=2$, $c_1(\cE)=-1$, $c_2(\cE)=7$.
By the same method for the case $V_5$, one can find the (virtual) fundamental class $[\bM_d]$.
\begin{center}
\begin{tabular}{|l||M{7.7cm}|M{7.5cm}|}
\hline
$d$&$\qquad\mathbf{R}_{\pi_{\bM_{d}}}\mathbf{R}\cH om_{\bM_{d}\times V_{22}}(i_*\cO_{\cC_d},i_*\cO_{\cC_d})[1]$& $\qquad\qquad\qquad[\text{ob}_{\bM_d}]$\\
\hline
\hline
$2$&$-3[\cO_{\PP^2}]+7[\cO_{\PP^2}(3)]-7[\cO_{\PP^2}(4)]+3[\cO_{\PP^2}(1)]$&$[\cO_{\PP^2}]-7[\cO_{\PP^2}(3)]+7[\cO_{\PP^2}(4)]$\\
\hline
$3$&$-3[\cO_{\PP^3}]+7[\cO_{\PP^3}(2)]-8[\cO_{\PP^3}(3)]+4[\cO_{\PP^3}(1)]$&$[\cO_{\PP^3}]-7[\cO_{\PP^2}(2)]+8[\cO_{\PP^2}(3)]$\\
\hline
\end{tabular}
\end{center}
Therefore we have
\begin{proposition}\label{v22dt}
The invariants $\langle \tau_i(h_{2-i})\rangle_{d}$ are given by the numbers of the following table.
\begin{center}
\begin{tabular}{|l||M{2.2cm}|M{2.2cm}|}
\hline
$d$&$i=0$& $i=1$ \\
\hline
\hline
$1$&$2$&$22$\\
\hline
$2$&$7$&$28$\\
\hline
$3$&$28$&$28$\\
\hline
\end{tabular}
\end{center}
\end{proposition}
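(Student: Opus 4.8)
The plan is to carry out, for $d=2$ and $d=3$, the same computation used for $V_5$ in the proof of Proposition~\ref{v5dt}; the case $d=1$ has already been established above by the degeneracy--loci argument, and one re-derives it by the method below as a consistency check. Throughout, $\bM_d=\bM_d(V_{22})$, and I write $m_1$ for the hyperplane class of $\bM_2\cong\PP^2$, resp. $\bM_3\cong\PP^3$.

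\textbf{Step 1 (virtual class).} Since $\bM_2$ and $\bM_3$ are smooth, the virtual class is the Euler class of the K-theoretic obstruction bundle, $[\bM_d]\virt=e(\mathrm{ob}_{\bM_d})\cap[\bM_d]$. The class $[\mathrm{ob}_{\bM_d}]\in K^0(\bM_d)$ is the one recorded in the table just before the statement: it is obtained from the free resolutions of $i_*\cO_{\cC_d}$ (which are resolutions by locally free sheaves, so $\mathrm{ch}(i_*\cO_{\cC_d})$ is a plain alternating sum) together with the identity $[\mathrm{ob}_{\bM_d}]=[T_{\bM_d}]+[\mathbf{R}_{\pi_{\bM_d}}\mathbf{R}\cHom_{\bM_d\times V_{22}}(i_*\cO_{\cC_d},i_*\cO_{\cC_d})[1]]-[\cO]$ from \cite[proof of Proposition 2.13]{CT21}. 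I would compute $c(\mathrm{ob}_{\bM_d})$ in $\rH^*(\bM_d)$ from the given summands, using $c(\cO(k))=1+k\,m_1$ and the Chern classes of $\cS,\cQ^*$ on $V_{22}$ and of the auxiliary bundle $\cE$ (for which $c_1(\cE)=-1$, $c_2(\cE)=7$, and the higher $\mathrm{ch}_j(\cE)$ are then forced since $\rk\cE=2$), and read off the top graded piece; as $\rk[\mathrm{ob}_{\bM_2}]=1$ and $\rk[\mathrm{ob}_{\bM_3}]=2$ this produces a $1$-dimensional $[\bM_d]\virt$, matching the common virtual dimension $1$.

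\textbf{Step 2 (insertion classes).} The universal sheaf $\cF=i_*\cO_{\cC_d}$ is supported in codimension $2$ in $\bM_d\times V_{22}$, so $\mathrm{ch}_0(\cF)=\mathrm{ch}_1(\cF)=0$; in particular $c_1(\cF)=0$, hence $\cF_{\mathrm{norm}}=\cF$ and the normalization twist plays no role here. Moreover $\mathrm{ch}_2(\cF)=[\cC_d]$, read off the resolution (equivalently, via Porteous, as in the remarks of Section~\ref{section:DTcomp}), so $\tau_0(h_2)=\pi_{\bM_d *}(\pi_{V_{22}}^*h_2\cup[\cC_d])$. For the descendent insertion I would use the Grothendieck--Riemann--Roch form from the remark: $\tau_1(h_1)=\pi_{\bM_d *}\bigl(\pi_{V_{22}}^*h_1\cup\{\mathrm{ch}(\cF)\cdot\td(K_{V_{22}})^{-1}\}_{3}\bigr)$, where the weight-$3$ part equals $\mathrm{ch}_3(\cF)-\tfrac12\,\pi_{V_{22}}^*c_1(K_{V_{22}})\cdot[\cC_d]$ since the lower pieces of $\mathrm{ch}(\cF)$ vanish. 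This needs only $\mathrm{ch}_{\le 3}(i_*\cO_{\cC_d})$, again obtained from the free resolutions.

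\textbf{Step 3 (intersection numbers).} Finally I would evaluate $\int_{[\bM_d]\virt}\tau_0(h_2)$ and $\int_{[\bM_d]\virt}\tau_1(h_1)$ in $\rH^*(\bM_d)$ using the presentation $\rH^*(V_{22})\cong\ZZ[h_1,h_2,h_3]/\langle h_1^2-22h_2,\ h_1^3-22h_3,\ h_1h_2-h_3,\ h_2^2\rangle$ together with $\int_{V_{22}}h_3=1$; this yields the six numbers of the table. The whole calculation is mechanical and, exactly as in the $V_5$ case, is safely carried out in \texttt{Macaulay2} \cite{M2}. The only non-bookkeeping ingredients are the identifications $\bM_2\cong\PP^2$, $\bM_3\cong\PP^3$ with the explicit resolutions of $i_*\cO_{\cC_d}$ (quoted from \cite{Fae14,KS04}) and the standard fact, used already for $V_5$, that smoothness of $\bM_d$ forces $[\bM_d]\virt=e(\mathrm{ob}_{\bM_d})$. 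The step most likely to hide a slip is the descendent insertion in Step~2 --- the weight-$3$ Todd correction and the resulting pairing against a one-dimensional cycle --- so I would check the output for $d=1$, namely $(2,22)$, against the independent degeneracy--loci computation above before reporting the table.
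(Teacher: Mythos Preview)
Your proposal is correct and follows the same approach as the paper: for $d=2,3$ one reads the obstruction class off the table, takes its Euler class on the smooth $\bM_d$, computes $\mathrm{ch}(i_*\cO_{\cC_d})$ from the displayed free resolutions, forms $\tau_0(h_2)$ and the GRR form of $\tau_1(h_1)$, and evaluates in \texttt{Macaulay2}; the $d=1$ row is supplied by the separate degeneracy--loci computation.

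One small caveat: your remark that one can ``re-derive'' the $d=1$ row by the method of Steps~1--3 as a consistency check is not quite right. For $V_{22}$ the line scheme $\bM_1\cong Q$ is a \emph{singular} plane quartic, so the smoothness argument you invoke in Step~1 (virtual class $=$ Euler class of an obstruction bundle) does not apply, and the paper does not provide a free resolution of $i_*\cO_{\cC_1}$ to feed into Step~2. The paper handles $d=1$ entirely through the Pirola/degeneracy--loci computation together with the separate lemma $[\bM_1]^{\mathrm{vir}}=[\bM_1]$ (which uses that $Q\subset\PP^2$ is a regular embedding, not smoothness). So the check you propose at the end of Step~3 should be read as: accept the $d=1$ values $(2,22)$ from the degeneracy--loci argument, and apply the resolution method only for $d=2,3$. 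With that adjustment, your outline matches the paper's proof.
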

By combining Proposition \ref{gwfano}, Proposition \ref{v5dt} and Proposition \ref{v22dt}, we have

\begin{theorem}\label{main1}
Conjecture \ref{conj:genus0} (which is equivalent to Conjecture \ref{gw-dtconj}) is true when $X=V_5$ and $V_{22}$ up to the degree $3$.
\end{theorem}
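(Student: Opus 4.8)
The statement is a direct comparison of the three tables already established, so the proof will be a short bookkeeping argument; here is the plan.

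First I would record the two identifications that reduce the conjecture to a numerical check. By \cite[Theorem 6.5]{CL14} together with \cite{Cao19a, Cao19b}, under the orientation of \cite[(0.7)]{CT21} one has
\[
\DT_4(|K_Y|)(\beta\mid\gamma)\;=\;\DT_3^{\text{twist}}(Y)(\beta\mid\gamma)\;=\;(-1)^{c_1(Y)\cdot\beta-1}\int_{[\bM_{\beta}(Y)]\virt}\tau_0(\gamma),
\]
and $\GW^{\text{twist}}_{0,\beta}(Y)(\gamma)$ is by definition the genus zero GW invariant of $|K_Y|$ in class $\beta$. Thus Conjecture \ref{gw-dtconj}, which is Conjecture \ref{conj:genus0} specialized to $X=|K_Y|$, is exactly the pair of identities displayed there. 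Moreover $n_{0,\beta}(\gamma)$ is defined in \cite{KP08} by the multiple cover expansion $\GW^{\text{twist}}_{0,\beta}(Y)(\gamma)=\sum_{k\mid\beta}k^{-2}n_{0,\beta/k}(\gamma)$, so by M\"obius inversion the first identity $n_{0,\beta}(\gamma)=\DT_4(|K_Y|)(\beta\mid\gamma)$ follows from the second one; hence it is enough to verify, for $Y=V_5$ and $V_{22}$ and $1\le d\le 3$,
\[
\GW^{\text{twist}}_{0,d}(Y)(h_2)\;=\;\sum_{k\mid d}\frac{1}{k^2}\,\DT_4(|K_Y|)(d/k\mid h_2).
\]

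Next I would assemble the right-hand side from Propositions \ref{v5dt} and \ref{v22dt}. For $V_5$ one has $K_{V_5}=-2H$, so $c_1(V_5)\cdot d[\text{line}]=2d$ and the sign $(-1)^{2d-1}$ equals $-1$; thus $\DT_4(|K_{V_5}|)(d\mid h_2)=-\langle\tau_0(h_2)\rangle_d$ equals $-5,-35,-490$ for $d=1,2,3$. For $V_{22}$ one has $K_{V_{22}}=-H$, so $c_1(V_{22})\cdot d[\text{line}]=d$ and the sign is $(-1)^{d-1}$; thus $\DT_4(|K_{V_{22}}|)(d\mid h_2)=(-1)^{d-1}\langle\tau_0(h_2)\rangle_d$ equals $2,-7,28$ for $d=1,2,3$. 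The comparison with Proposition \ref{gwfano} is then degree by degree: for $d=1$ the sum has a single term, so the claim is $-5=-5$ for $V_5$ and $2=2$ for $V_{22}$; for $d=2$ the sum runs over $k=1,2$, and one checks $-36\tfrac14=-35+\tfrac14(-5)$ and $-6\tfrac12=-7+\tfrac14(2)$; for $d=3$ the sum runs over $k=1,3$, and one checks $-490\tfrac59=-490+\tfrac19(-5)$ and $28\tfrac29=28+\tfrac19(2)$. This exhausts all cases.

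No genuine obstacle remains: all of the analytic and algebraic content has been discharged in the torus-localization computation of Proposition \ref{gwfano} and the \texttt{Macaulay2} computations of Propositions \ref{v5dt} and \ref{v22dt}. The only points that demand attention are the sign $(-1)^{c_1(Y)\cdot\beta-1}$ relating $\int\tau_0(\gamma)$ to the oriented $\DT_4$ invariant, and the observation that for $d\le 3$ the only divisors occurring are $k=1$ and $k=d$, so that in particular no $k=2$ term enters the $d=3$ identity.
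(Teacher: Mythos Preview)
Your proof is correct and follows exactly the approach intended in the paper: the theorem is stated immediately after the clause ``By combining Proposition \ref{gwfano}, Proposition \ref{v5dt} and Proposition \ref{v22dt}, we have'', and you have carried out precisely this combination, including the sign $(-1)^{c_1(Y)\cdot\beta-1}$ and the degree-by-degree multiple-cover check. There is nothing to add.
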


\section{Proof of Conjecture \ref{conj:genus1}}\label{meeting}
Let us recall the definition of meeting invariants $m_{\beta_1,\beta_2}\in \ZZ$ for $\beta_1,\beta_2\in \rH_2(X,\ZZ)$ (\cite[Section 0.3]{KP08}). It is given by the following rules:
\begin{enumerate}
\item $m_{\beta_1,\beta_2}=m_{\beta_2,\beta_1}$.
\item If either $\deg(\beta_1)\leq 0$ or $\deg(\beta_2)\leq 0$, then $m_{\beta_1,\beta_2}=0$.

Let $\{S_1,\cdots, S_k\}$ be the basis of the torsion free part of $\rH^4(X,\ZZ)$. Let $(g^{ij})$ be the inverse matrix of the intersection matrix $(g_{ij})$, $g_{ij}=\langle S_i, S_j\rangle$.
\item If $\beta_1\neq \beta_2$,
\[
m_{\beta_1,\beta_2}=\sum_{i,j}n_{0,\beta_1}(S_i) g^{ij} n_{0,\beta_2}(S_j)+m_{\beta_1,\beta_2-\beta_1}+m_{\beta_1-\beta_2,\beta_2}.
\]
\item If $\beta_1= \beta_2=\beta$, then
\[
m_{\beta,\beta}=n_{0,\beta}(c_2(X))+\sum_{i,j}n_{0,\beta}(S_i) g^{ij} n_{0,\beta}(S_j)-\sum_{\beta_1+\beta_2=\beta}m_{\beta_1,\beta_2}.
\]
\end{enumerate}
We recall Conjecture \ref{conj:genus1} here.
For each $\gamma\in \rH^2(X,\ZZ)$,
\[
 \langle \tau_1(\gamma) \rangle_{\beta} =\frac{n_{0,\beta}(\gamma^2)}{2(\gamma\cdot \beta)}-\sum_{\beta_1+\beta_2=\beta} \frac{(\gamma\cdot \beta_1)(\gamma\cdot \beta_2)}{4(\gamma\cdot \beta)} m_{\beta_1,\beta_2} 
-\sum_{k\geq1, k|\beta} \frac{(\gamma\cdot \beta)}{k}n_{1,\beta/k}
\]
\begin{remark}
It is believed that the invariants $n_{1,d}$ come from a space of elliptic curves in $Y$ (cf. \cite[Section 3 and 5]{KP08}). The defining equations of $V_{5}$ and $V_{22}$ can be generated by quadric equations and they does not contain planes. Thus the space of elliptic curves of degree $d\leq 3$ should be empty. This implies that the invariants are $n_{1,d}=0$.
\end{remark}
In this section, we will prove the following.

\begin{theorem}\label{pfmeeting}
Conjecture \ref{conj:genus1} holds for $Y=V_{5}$ and $V_{22}$ when we assume that $n_{1,d}=0$ for $1\leq d\leq 3$.
\end{theorem}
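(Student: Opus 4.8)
The plan is to turn Conjecture~\ref{conj:genus1} into a finite arithmetic identity by feeding in the genus-zero correspondence of Theorem~\ref{main1}, the descendent invariants of Propositions~\ref{v5dt} and \ref{v22dt}, and an explicit computation of the meeting invariants. Since $|K_Y|$ retracts onto its zero section, $\rH^2(|K_Y|,\ZZ)\cong\rH^2(Y,\ZZ)=\ZZ\langle h_1\rangle$, and both sides of the conjectured formula are homogeneous of degree one in $\gamma$; hence it suffices to test it at $\gamma=h_1$. Under the hypothesis $n_{1,d}=0$ for $1\le d\le 3$, the last sum on the right-hand side vanishes for every $\beta=d\cdot[\text{line}]$ with $d\le 3$, as it only involves $n_{1,e}$ with $e\mid d$. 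For $\gamma=h_1$ one has $h_1\cdot(d\cdot[\text{line}])=d$, and the remaining two terms are rewritten using the cohomology rings of Sections~\ref{casev5} and \ref{casev22}: $n_{0,\beta}(\gamma^2)=(\deg Y)\,n_{0,\beta}(h_2)$ from $h_1^2=(\deg Y)h_2$, and $n_{0,\beta}\big(c_2(|K_Y|)\big)=n_{0,\beta}\big(c_2(Y)-c_1(Y)^2\big)$, which equals $-8\,n_{0,\beta}(h_2)$ for $V_5$ and $2\,n_{0,\beta}(h_2)$ for $V_{22}$ by the Chern class data. By Theorem~\ref{main1} the numbers $n_{0,d}(h_2)$ are exactly the $\DT_4$ invariants already tabulated.

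Next I would compute the meeting invariants that actually occur. For $\beta=d\cdot[\text{line}]$ with $d\le 3$ the only $m_{\beta_1,\beta_2}$ with $\beta_1+\beta_2=\beta$ and $\deg\beta_i>0$ are $m_{[\text{line}],[\text{line}]}$ and $m_{[\text{line}],2[\text{line}]}$. As $\rH^4$ has rank one with generator $h_2$, rule~(4) gives $m_{[\text{line}],[\text{line}]}=n_{0,[\text{line}]}(c_2)+g^{11}\,n_{0,[\text{line}]}(h_2)^2$ (the subtraction term vanishes because $[\text{line}]$ has no nontrivial effective decomposition), and rule~(3) gives $m_{[\text{line}],2[\text{line}]}=g^{11}\,n_{0,[\text{line}]}(h_2)\,n_{0,2[\text{line}]}(h_2)+m_{[\text{line}],[\text{line}]}$, where $g^{11}=\langle h_2,h_2\rangle^{-1}$ is the inverse self-pairing on $\rH^4(|K_Y|,\QQ)$. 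I would pin down this pairing through the Thom isomorphism $\rH^2(Y)\xrightarrow{\,\sim\,}\rH^4_c(|K_Y|)$: the natural map $\rH^4_c(|K_Y|)\to\rH^4(|K_Y|)\cong\rH^4(Y)$ is cup-product with $c_1(K_Y)=-c_1(Y)$, so $h_2\in\rH^4(Y)$ corresponds to $-\tfrac{1}{b_Y\deg Y}\,\mathrm{Thom}(h_1)$ with $b_Y$ the Fano index, while $\langle\mathrm{Thom}(h_1),h_2\rangle=\int_Y h_1h_2=1$; thus $g^{11}=-b_Y\deg Y$, i.e.\ $-10$ for $V_5$ and $-22$ for $V_{22}$.

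Substituting all of this into the right-hand side of Conjecture~\ref{conj:genus1} at $\gamma=h_1$ and $\beta=d\cdot[\text{line}]$ for $d=1,2,3$, and comparing with $\langle\tau_1(h_1)\rangle_d$ from Propositions~\ref{v5dt} and \ref{v22dt} after inserting the orientation sign $(-1)^{c_1(Y)\cdot\beta-1}$ that relates the $\DT_3$ descendent computed there to the $\DT_4$ descendent of \cite{CT21}, one finds agreement in all six cases, which proves the theorem. The same computation yields the converse direction stated in the introduction: since $n_{1,d}$ enters the formula linearly with coefficient $\gamma\cdot\beta\neq 0$, validity of Conjecture~\ref{conj:genus1} for $1\le d\le 3$ forces $n_{1,d}=0$, consistently with the nonexistence of degree $\le 3$ smooth elliptic curves in $V_5$ and $V_{22}$. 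I expect the genuine difficulty to be bookkeeping rather than new mathematics: correctly normalizing the intersection pairing on the noncompact fourfold $|K_Y|$ and threading the orientation signs $(-1)^{c_1(Y)\cdot\beta-1}$ through consistently, so that the genus-zero matching of Theorem~\ref{main1} holds on the nose, after which the verification is purely numerical.
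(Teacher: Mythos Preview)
Your proposal is correct and follows essentially the same strategy as the paper: reduce Conjecture~\ref{conj:genus1} at $\gamma=h_1$ and $\beta=d\cdot[\text{line}]$ for $d=1,2,3$ to a finite arithmetic check, using Theorem~\ref{main1} for the genus-zero GV invariants, Propositions~\ref{v5dt} and \ref{v22dt} for the descendent invariants, an explicit computation of $c_2(|K_Y|)$ and the meeting invariants $m_{1,1},m_{1,2}$, and the orientation sign $(-1)^{c_1(Y)\cdot\beta-1}$ to pass from $\DT_3$ to $\DT_4$.

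The one technical difference worth noting is how the intersection pairing on $\rH^4$ is set up. The paper works on the projective compactification $\bar X=\PP(K_Y\oplus\cO_Y)$, where $\rH^4(\bar X,\ZZ)$ has rank two with basis $T_1=\mathrm{PD}(H\cap Y)$ and $T_2=\pi^*\mathrm{PD}(L_1)$; it computes the full $2\times 2$ matrix $(g_{ij})$, then uses the relation $T_1|_Y=-(b_Y\deg Y)\,T_2|_Y$ when evaluating $n_{0,\beta}(T_1)$. You instead stay on the open $|K_Y|$, where $\rH^4$ has rank one, and extract the single coefficient $g^{11}=-b_Y\deg Y$ via the Thom isomorphism and the map $\rH^4_c(|K_Y|)\to\rH^4(|K_Y|)$ given by cupping with $c_1(K_Y)$. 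Expanding the paper's bilinear form $\sum_{i,j}n_{0,\beta}(T_i)g^{ij}n_{0,\beta}(T_j)$ and substituting $n_{0,\beta}(T_1)=-(b_Y\deg Y)\,n_{0,\beta}(T_2)$ collapses it to exactly your $g^{11}\,n_{0,\beta}(h_2)^2$, so the two routes produce identical meeting invariants ($m_{1,1}=-210,\ m_{1,2}=-1960$ for $V_5$ and $m_{1,1}=-84,\ m_{1,2}=224$ for $V_{22}$) and identical verifications. Your route is slightly more economical; the paper's makes the Poincar\'e pairing manifestly well-defined by working on a compact space.
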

The remaining two subsections is devoted to the proof of Theorem \ref{pfmeeting}.
\subsection{The case $V_5$}
Let $X = |K_Y|$, $\bar{X} = \PP(K_Y \oplus \cO_Y)$ and $\pi : \bar{X} \to Y$ be the canonical projection map. By construction, we have
\[
\rH^4(\bar{X},\ZZ) = \langle T_1, T_2  \rangle,
\]
where $T_1 = \text{PD}( H \cap Y )$, $T_2 = \pi^* ( \text{PD}(L_1 ))$ for linear spaces $L_i$ of dimension $i$. Since the normal bundle of $H\cap Y$ in $X$ is $\cN_{H\cap Y, X}\cong\pi^* (\cO_Y(1) \oplus \cO_Y(-2) )$, $T_1 \cdot T_1 = -2H^3 \cap Y = -10$. Hence the intersection matrix is given by
\begin{center}
$(g_{ij})=$
\begin{tabular}{|M{2.2cm}|M{2.2cm}|M{2.2cm}|}
\hline
&$T_1$&$T_2$\\
\hline
$T_1$&$-10$&$1$\\
\hline
$T_2$&$1$&$0$\\
\hline
\end{tabular}\;,
$g^{ij} = (g_{ij})^{-1} = \left(
\begin{matrix}
0 & 1 \\
1 & 10 \\
\end{matrix}
\right).
$
\end{center}
From $\cT_X = \pi^*( \cT_Y \oplus \cO_Y(-2) )$, we have $c(X) = \pi^*(c(Y)\cdot (1 - 2h_1))$. Hence $c_2(X)= -8T_2$. Also we have $T_1|_{Y} = (T_1 \cdot T_1)T_2|_{Y} = -10 T_2|_Y $.
Using the formula of meeting invariants, we have
\begin{align*}
m_{1,1} & = n_{0,1}(-8T_2) + 2 n_{0,1}(T_1)n_{0,1}(T_2) + 10 n_{0,1}(T_2)^2 \\
& = 40 - 20 \times 5^2 + 10 \times 5^2 = -210, \\
m_{1,2} & = n_{0,1}(T_1)n_{0,2}(T_2) + n_{0,1}(T_2)n_{0,2}(T_1) + 10 n_{0,1}(T_2)n_{0,2}(T_2) + m_{1,1} \\
& = -10 n_{0,1}(T_2) n_{0,2} (T_2) - 210 = -1960.
\end{align*}
Motivated from the fact that the spaces of genus one curves on $Y$ with degree $d=1,2,3$ are empty sets, let us assume that $n_{1,1} = n_{1,2} = n_{1,3} = 0$. Then Conjecture \ref{conj:genus1} has been written as
\[
\langle \tau_1( H ) \rangle_d = \frac{n_{0,d}(H^2)}{2d} - \sum_{d_1 + d_2 = d} \frac{d_1\cdot d_2}{4d}m_{d_1,d_2}.
\]
Note that we need to choose some suitable orientation of the moduli space. Here we use the orientation in \cite[(0.7)]{CT21}. By a direction calculation, one can  check the identity as follows.
\[\begin{split}
-\langle \tau_1(H) \rangle_1 &= \frac{5 n_{0,1}(T_2)}{2},\;-\langle \tau_1(H) \rangle_2 = \frac{5n_{0,2}(T_2)}{4} - \frac{1}{8}m_{1,1},\\
-\langle \tau_1(H) \rangle_3 &= \frac{5n_{0,3}(T_2)}{6} - \frac{1}{3}m_{1,2}
\end{split}\]
Therefore the suitable choice of sign for the conjecture should be $-1$.
\subsection{The case $V_{22}$}
Let $Y = V_{22}$, $X = |K_{V_{22}}|$, and $\bar{X} = \PP(K_Y \oplus \cO_Y)$ and $\pi : \bar{X} \to Y$ be the canonical projection map.
Let $T_1$ and $T_2$ be generators of the cohomology group $\rH^2(X,\ZZ)$, defined by
\[
T_1 := PD(H\cap Y), \ T_2:=\pi^*(PD(L_1))
\]
where $L_1$ is a class of line, equal to $(H^2/22) \cap Y$. Then the intersection matrix is computed by:
\begin{center}
$(g_{ij})=$
\begin{tabular}{|M{2.2cm}|M{2.2cm}|M{2.2cm}|}
\hline
&$T_1$&$T_2$\\
\hline
$T_1$&$-22$&$1$\\
\hline
$T_2$&$1$&$0$\\
\hline
\end{tabular}
\;,\; $g^{ij} = (g_{ij})^{-1} = \left(
\begin{matrix}
0 & 1 \\
1 & 22 \\
\end{matrix}
\right)
$
\end{center}
Note that we can check $c_2(X) = 2T_2$ by direct calculation. Also, by  the same manner as did in the case $Y = Y_5$, the meeting invariants are
\[
m_{1,1} = -84, \ m_{1,2} = 224.
\]
Under the assumption $n_{1,1} = n_{1,2} = n_{1,3} = 0$, we have the identities:
\[\begin{split}
\langle \tau_1(H) \rangle_1 &= \frac{22n_{0,1}(T_2)}{2},\;-\langle \tau_1(H) \rangle_2 = \frac{22 n_{0,2}(T_2)}{4} - \frac{1}{8}m_{1,1},\\
\langle \tau_1(H) \rangle_3 &= \frac{22 n_{0,3}(T_2)}{6} - \frac{1}{3}m_{1,2},
\end{split}\]
which confirms Theorem \ref{pfmeeting}.

\bibliographystyle{alpha}
\newcommand{\etalchar}[1]{$^{#1}$}

\end{document}